\numberwithin{equation}{section}
\renewcommand{\eqref}[1]{(\ref{#1})} %for some reason eqref
\theoremstyle{plain} \newtheorem{theorem}{Theorem}[section]
\newtheorem{corollary}[theorem]{Corollary}
\newtheorem{lemma}[theorem]{Lemma}
\newtheorem{proposition}[theorem]{Proposition}
\newtheorem{definition}[theorem]{Definition}
\theoremstyle{remark}
\newtheorem{remark}[theorem]{Remark}
\newtheorem*{claim}{Claim}
\newcommand{\mscL}{\mathscr{L}}
\newcommand{\SL}{\mathrm{SL}}
\newcommand{\PGL}{\mathrm{PGL}}
\newcommand{\Gg}{\mathbb{G}}
\newcommand{\Fp}{\mathbb{F}_p}
\newcommand{\Fpt}{\mathbb{F}^\times_p}
\newcommand{\Cc}{\mathbb{C}}
\newcommand{\Rr}{\mathbb{R}}
\newcommand{\Zz}{\mathbb{Z}}
\newcommand{\Ff}{\mathbb{F}}
\newcommand{\Ee}{\mathbb{E}}
\renewcommand{\Re}{\mathrm{Re}}
\newcommand{\cov}{G}
\newcommand{\Pro}{\mathbb{P}}
\newcommand{\Qq}{\mathbb{Q}}
\newcommand{\bfbeta}{\uple{\beta}}
\newcommand{\bfmu}{\uple{\mu}}
\DeclareMathOperator{\res}{res}
\newcommand{\expect}{\mathbb{E}}
\newcommand{\mods}[1]{\,(\mathrm{mod}\,{#1})}
\newcommand{\ov}[1]{\overline{#1}}
\newcommand{\KL}{\mathcal{K\ell}}
\newcommand{\uple}[1]{{\boldsymbol{{#1}}}}
\newcommand{\nmom}{\mathrm{M}}
\newcommand{\lra}{\longrightarrow}
\newcommand{\ra}{\rightarrow}
\newcommand{\eps}{\epsilon}
\DeclareMathOperator{\hypk}{Kl_2}
\newcommand{\rZ}{\mathrm{Z}}
\newcommand{\rand}[1]{\mathrm{{#1}}}
\begin{document}

\title[Gaussian distribution and Hecke eigenvalues]{Gaussian
  distribution for the divisor function and Hecke eigenvalues in
  arithmetic progressions}

\date{\today}

\author[\'E. Fouvry]{\'Etienne Fouvry}
\address{Universit\'e Paris Sud, Laboratoire de Math\'ematique\\
  Campus d'Orsay\\ 91405 Orsay Cedex\\France}
\email{etienne.fouvry@math.u-psud.fr}

\author[S. Ganguly]{Satadal Ganguly}
\address{Indian Statistical Institute, Kolkata, India}
\email{satadalganguly@gmail.com}

\author[E. Kowalski]{Emmanuel Kowalski}
\address{ETH Z\"urich -- D-MATH\\
  R\"amistrasse 101\\
  CH-8092 Z\"urich\\
  Switzerland} \email{kowalski@math.ethz.ch}

\author[Ph. Michel]{Philippe Michel}
\address{EPFL/SB/IMB/TAN, Station 8, CH-1015 Lausanne, Switzerland }
\email{philippe.michel@epfl.ch}

\thanks{Ph. M. was partially supported by the SNF (grant
  200021-137488) and the ERC (Advanced Research Grant 228304);
  \'E. F. thanks ETH Z\"urich, EPF Lausanne and the Institut
  Universitaire de France for financial support; S.G. thanks EPF
  Lausanne for financial support. }

\subjclass[2010]{11F11, 11F30, 11T23, 11L05,  60F05}

\keywords{Divisor function, Hecke eigenvalues, Fourier coefficients of
  modular forms, arithmetic progressions, central limit theorem,
  Kloosterman sums, monodromy group, Sato-Tate equidistribution}

\begin{abstract}
  We show that, in a restricted range, the divisor function of
  integers in residue classes modulo a prime follows a Gaussian
  distribution, and a similar result for Hecke eigenvalues of
  classical holomorphic cusp forms. Furthermore, we obtain the joint
  distribution of these arithmetic functions in two related residue
  classes. These results follow from asymptotic evaluations of the
  relevant moments, and depend crucially on results on the
  independence of monodromy groups related to products of Kloosterman
  sums.
  % give information concerning the average distribution of Fourier
  % coefficients of holomorphic cusp
  % forms, or of the divisor function, in arithmetic
  % progressions. Using
  % the method of moments, we prove a Central Limit Theorem in a
  % suitable range. We also study the dependence between distributions
  % in distinct residue classes for the same modulus.
\end{abstract}

\maketitle
%%\setcounter{tocdepth}{1}

% \tableofcontents

\section{Introduction}

The distribution of arithmetic functions in arithmetic progressions is
one of the cornerstones of modern analytic number theory, with a
particular focus on issues surrounding uniformity with respect to the
modulus (see~\cite{F50} for a recent survey).  Besides the case of
primes in arithmetic progressions, much interest has been devoted to
the divisor function $d(n)$ and higher-divisor functions, in
particular because -- in some precise sense -- a good understanding of
a few of these is equivalent to knowledge about the primes themselves
(see, e.g.,~\cite[Th\'eor\`eme 4]{F84}).
\par
The consideration of the second moment for primes $p\leq X$ in
arithmetic progressions to moduli $q\leq Q\leq X/(\log X)^A$ leads to
the Barban-Davenport-Halberstam theorem (see,
e.g.,~\cite[Th. 17.2]{IK}), which has been refined to an asymptotic
formula for $Q=X$ by Montgomery~\cite{montg}. Similarly,
Motohashi~\cite{moto} evaluated asymptotically the variance of the
divisor function $d(n)$ for $n\leq X$ in arithmetic progressions
modulo $q\leq X$.
\par
We will show that one can determine an asymptotic distribution for the
divisor function $d(n)$ for $n\leq X$ in arithmetic progressions
modulo a single prime $p$, provided however that $X$ is a bit smaller
than $p^{2}$.

\begin{theorem}[Central Limit Theorem for the divisor function]\label{th-1-1}
  Let $w$ be a non-zero real-valued smooth function on $\Rr$ with
  compact support in $]0,+\infty[$ and with $L^2$ norm $\|w\|$.  For a
  prime $p$, let
$$
S_d(X,p,a)=\sum_{\substack{n\geq 1\\ n\equiv a\bmod p}}
{d(n)w\Bigl(\frac{n}{X}\Bigr)},
$$
and
\begin{align}
M_d(X,p)&=\frac{1}{p}\sum_{n\geq1}{d(n)w\Bigl(\frac{n}{X}\Bigr)}-
\frac{1}{p^{2}}\int_{0}^{+\infty}
(\log x+2\gamma-2\log p)w\Bigl(\frac{x}{X}\Bigr)dx\label{eq-main-term}\\
&=
\frac{1}{p }\sum_{n\geq1}{d(n)
w\Bigl(\frac{n}{X}\Bigr)}+O\Bigl(\frac1{p^{2}}X(\log X)\Bigr),\nonumber
\end{align}
where $\gamma$ is the Euler constant. For $a\in\Fpt$, let
$$
E_d(X,p,a)=\frac{S_d(X,p,a)-M_d(X,p)}{(X/p)^{1/2}}.
$$
\par 
Let $\Phi(x)\geq 1$ be any real-valued function, such that
\begin{equation*}
\Phi(x)\lra +\infty\text{ as } x\ra +\infty,\quad\quad
\Phi(x)=O_\eps ( x^{\eps}),
\end{equation*}
for any $\eps>0$ and $x\geq 1$. For any prime $p$, let
$X=p^2/\Phi(p)$. Then as $p\ra +\infty$ over prime values, the random
variables
$$
a\mapsto \frac{E_{d}(X,p,a)}{\|w\|\sqrt{2\pi^{-2}(\log \Phi(p))^3}}
$$
on $\Fpt$, with the uniform probability on $\Fpt$, converge in
distribution to a standard Gaussian with mean $0$ and variance $1$,
i.e., for any real numbers $\alpha<\beta$, we have
$$
\frac{1}{p-1}\Bigl|\Bigl\{ a\in\Fpt\,\mid\, \alpha\leq
\frac{E_{d}(X,p,a)} {\|w\|\sqrt{2\pi^{-2}(\log \Phi(p))^3}} \leq
\beta \Bigr\}\Bigr|\ \underset{p\rightarrow \infty} {\longrightarrow}\
\frac{1}{\sqrt{2\pi}}\int_{\alpha}^{\beta}e^{-t^2/2}dt.
$$
\end{theorem}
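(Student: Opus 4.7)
The plan is to apply the method of moments: by the Frechet-Shohat theorem, it suffices to prove that for every fixed $k\geq 1$,
$$\nmom_k(p):=\frac{1}{p-1}\sum_{a\in\Fpt}\left(\frac{E_d(X,p,a)}{\|w\|\sqrt{2\pi^{-2}(\log\Phi(p))^3}}\right)^k \lra \mu_k\ \text{ as } p\ra\infty,$$
with $\mu_k=(k-1)!!$ for even $k$ and $\mu_k=0$ for odd $k$. The first step is to detect the congruence $n\equiv a\pmod p$ by additive-character orthogonality. The $h\equiv 0\pmod p$ contribution, once expanded via the Dirichlet divisor problem asymptotic, matches the main term $M_d(X,p)$ recorded in \eqref{eq-main-term} up to an error that is negligible after normalization by $(X/p)^{1/2}$ (using $X=p^2/\Phi(p)$ and $\Phi(p)=p^{o(1)}$). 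For each $h\in\Fpt$, the inner twisted sum $\sum_n d(n)e_p(hn)w(n/X)$ is expanded via the Voronoi summation formula for $d$; combining the dual additive characters with the original $e_p(-ha)$ and summing over $h$ produces a Kloosterman sum, yielding a representation of the schematic form
$$E_d(X,p,a)=\frac{1}{\sqrt{p\,\Phi(p)}}\sum_{m\geq 1}d(m)\,\widetilde w\!\left(\frac{m}{\Phi(p)}\right)\hypk(am;p)+(\text{negligible}),$$
where $\widetilde w$ is a Bessel-type smooth transform of $w$ effectively truncating $m\leq\Phi(p)^{1+\eps}$.

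Raising this expression to the $k$-th power and averaging over $a\in\Fpt$, the key object becomes the Kloosterman-product average
$$C_k(\uple m;p):=\frac{1}{p-1}\sum_{a\in\Fpt}\prod_{i=1}^k\hypk(am_i;p),\qquad\uple m=(m_1,\ldots,m_k)\in(\Fpt)^k.$$
Call $\uple m$ \emph{diagonal} if it arises from a perfect matching of equal pairs (possible only for even $k$), \emph{off-diagonal} otherwise. For diagonal $\uple m$, Sato-Tate equidistribution for $\{\hypk(\cdot;p)\}$ gives $C_k(\uple m;p)\sim p^{k/2}$ times an explicit combinatorial constant; summation over diagonal $\uple m$, weighted by $\prod_i d(m_i)$ and analyzed via Mellin transforms of $\widetilde w$ together with the Dirichlet-series asymptotic for $\sum_{m\leq\Phi(p)}d(m)^2/m$ (whose generating function $\zeta(s)^4/\zeta(2s)$ has a pole of order $4$ at $s=1$), produces exactly the normalized Gaussian moment $\mu_k$. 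The combinatorial factor $(k-1)!!$ arises as the number of perfect matchings on $k$ points.

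The main obstacle and key technical input is the estimation of the off-diagonal contribution. For off-diagonal $\uple m$, the sum $C_k(\uple m;p)$ is a complete character sum over $a\in\Fpt$ of the trace function of the tensor product sheaf $\bigotimes_{i=1}^k[\times m_i]^*\KL_2$ on $\Gg_m/\Ff_p$. By Deligne's equidistribution theorem and the Grothendieck-Lefschetz trace formula, one obtains square-root cancellation $C_k(\uple m;p)=O(p^{k/2-1/2+\eps})$, but only provided the geometric monodromy representation of this tensor product contains no non-trivial invariants. This is precisely the \emph{independence of monodromy groups} for products of Kloosterman sheaves referenced in the abstract: the relevant theorem guarantees that for non-matched $\uple m$, the tensor product is geometrically irreducible and non-trivial, hence has trivial invariants. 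Summed over off-diagonal $\uple m$ in the range $m_i\leq\Phi(p)^{1+\eps}$ (of total size $p^{o(1)}$) with polynomial weights, this contributes $O(p^{-1/2+o(1)})$ to $\nmom_k(p)$, hence is negligible. Combining diagonal and off-diagonal yields $\nmom_k(p)\to\mu_k$, completing the proof.
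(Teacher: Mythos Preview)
Your proposal follows essentially the same strategy as the paper: Voronoi summation to convert $E_d$ into a Kloosterman-weighted dual sum (Proposition~\ref{stack1}), then a moment computation governed by the average of products of Kloosterman sums, evaluated via independence of the pulled-back Kloosterman sheaves (Proposition~\ref{390}), with the $(2,\ldots,2)$ pairings producing the factor $m_\kappa=(k-1)!!$ and the Rankin--Selberg-type sum $\sum d(n)^2 W_d(n/Y)^2$ supplying $\|w\|^2\cdot 2\pi^{-2}(\log\Phi(p))^3$ via Proposition~\ref{pr-mainterm}. Two small corrections to your sketch: for non-matched $\uple m$ the tensor product $\bigotimes_i[\times m_i]^*\KL$ is typically \emph{not} geometrically irreducible---what one actually needs (and what Goursat--Kolchin--Ribet gives) is that the monodromy is the full product $\SL_2^{\nu}$, so the invariants vanish exactly when some multiplicity is odd; and within your ``diagonal'' (mirror) case, configurations other than $(2,\ldots,2)$ do \emph{not} enjoy square-root cancellation in $a$, but are instead negligible because they carry fewer free $n$-variables (this is the passage from~\eqref{632} to~\eqref{1M} in the paper).
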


%%%%%%%%%%%%%%%%%%%%%

In fact, our results are more general, in three directions: (1) we
will consider, in addition to the divisor function, the Fourier
coefficients of any classical primitive holomorphic modular form $f$
of level $1$ (e.g., the Ramanujan $\tau$ function); (2) we will
compute the moments of the corresponding random
variables and, for a fixed moment, obtain a meaningful asymptotic in a
wider range of $X$ and $p$; (3) we will also consider the joint
distribution of 
$$
a\mapsto (E_{d}(X,p,a),E_d(X,p,\gamma(a)))
$$
when $\gamma$ is a fixed projective linear transformation (e.g.,
$\gamma(a)=a+1$, $\gamma(a)=2a$, $\gamma(a)=-a$, $\gamma (a)=1/a$,
which illustrate various interesting phenomena.) For all these
results, the crucial ingredients are the Voronoi summation formula,
and the Riemann Hypothesis over finite fields, in the form of results
of independence of monodromy groups of sheaves related to Kloosterman
sums.
\par
We now introduce the notation to handle these more general problems.
As in the statement above, we fix a non-zero smooth function $w\,: \
\Rr\rightarrow \Rr$, with compact support in $[w_0, w_1]$ with $0<
w_0<w_1 < +\infty$. For any modulus $c\geq 1$, let
$$
S_d(X,c,a)=\sum_{\substack{n\geq 1\\ n\equiv a\bmod c}}
{d(n)w\Bigl(\frac{n}{X}\Bigr)}.
$$
\par
This sum has, asymptotically, a natural main term (see,
e.g.,~\cite{La-Zh}) which we denote by $M_d(X,c)$, and which coincides
with $M_d(X,p)$ when $c=p$ is prime (see \eqref{eq-voronoi-d} below).
The number of terms in $S_d(X,c,a)$ is $\approx X/c$ and the
\emph{square root cancellation philosophy} suggests that its
difference with the main term should be of size
\begin{equation}\label{srp}
  S_d(X,c,a)-M_d(X,c)\ll (X/c)^\frac{1}{2} X^\epsilon,
\end{equation}
as long as $X/c$ gets large. Thus the map
$$
\rZ\, :\ a\in (\Zz/c\Zz)^\times \mapsto
E_d(X,c,a)=\frac{S_d(X,c,a)-M_d(X,c)}{(X/c)^{1/2}}.
$$ 
is a natural normalized error term that we wish to study as a random
variable on $(\Zz/c\Zz)^\times$ equipped with the uniform probability
measure (here and below, we sometimes omit the dependency on $p$ and
$X$ to lighten the notation $\rZ$).
\par
Similarly, consider a primitive (Hecke eigenform) holomorphic cusp
form $f$ of even weight $k$ and level $1$ (these restrictions are
mainly imposed for simplicity of exposition). We write
$$ 
f ( z ) = \sum_{n \geq 1} \rho_f ( n ) n^{(k-1)/2}e ( n z )
$$
its Fourier expansion at infinity, so that $\rho_f(1)=1$ and
$\rho_f(n)$ is the eigenvalue of the Hecke operator $T(n)$ (suitably
normalized). We let
\begin{gather*}
  S_f ( X, c, a ) =\sum_{n \equiv a ( \textnormal{mod } c )} \rho_f
  (n) w\Bigl(\frac{n}{X}\Bigr),\quad M_f(X, c) =\frac{1}{c} \sum_{n
    \geq 1}\rho_f ( n )w\Bigl(\frac{n}{X}\Bigr),\\
  E_f(X, c, a) = \frac{S_f ( X, c, a )-M_f(X, c)}{(X/c)^{1/2}},
\end{gather*}
for $c\geq 1$ and any integer $a$. Note that, in this case, the
integral   representation
$$
M_f(X,c)=\frac{1}{c}\times \frac{1}{2\pi
  i}\int_{2-i\infty}^{2+i\infty} \hat w (s)\,X^s\,L(s, f ) ds
$$
in terms of the Mellin transform $\hat{w}$ of $w$ shows that the main
term is very small, namely
\begin{equation}\label{aze} 
  M_f(X,c)
  \ll_{f,A} c^{-1}X^{-A}
\end{equation}  for every positive $A$, 
uniformly for $c\geq 1$ and $X \geq 1$.
\par
We will study the distribution of
$$
a\mapsto  E_f(X, p, a),\quad\quad a\mapsto E_d(X,p,a)
$$
for $p$ prime using the method of moments. Thus, for any integer
$\kappa\geq 1$, we define
\begin{equation}\label{defM} 
{\mathcal M}_{\star} (X,c\,;\kappa)=\frac{1}{c}\sum_{\substack{a\bmod c\\(a,c)=1}}
E_{\star}(X, c, a)^\kappa,\quad\quad \star=d\text{ or } f.
\end{equation}
\par
The first moment is very easy to estimate, and besides Motohashi's
work (which considers the average of $\mathcal{M}_d(X,c;2)$ over
$c\leq X$), the second moment has recently been discussed by
Blomer~\cite{Bl}, L\"u~\cite{Lu} and Lau--Zhao~\cite{La-Zh}. In
particular, Lau and Zhao obtained an asymptotic formula in the range
$X^{1/2}<c<X$ (see \eqref{19:56} below; note that the range
$c<X^{1/2}$ seems to be much more delicate.)
\par
We will evaluate \emph{any} moment, in a suitable range. Precisely, in
\S \ref{ProofTHM1.1} we will prove:

%%%%%%%%%%%%%%
\begin{theorem}\label{funda}
  Let the notation be as above, with $\star=d$, the divisor function,
  or $\star=f$, $f$ a Hecke form of weight $k$ and level $1$. Let $p$
  be a prime number. Then, for every integer $\kappa\geq 1$, for every
  positive $\delta $, for every positive $\epsilon$, for every $X$
  satisfying
\begin{equation}\label{X1/2<p<X}
2\leq X^{1/2}\leq p < X^{1-\delta},
\end{equation}
  we have the equality 
\begin{equation}\label{mark}
{\mathcal M}_{\star} (X,p\,;\kappa) = C_{\star} (\kappa) 
+
O\Bigl(
p^{-1/2+\eps}\Bigl(\frac{p^2}X\Bigr)^{\kappa/2}
+\Bigl(
\frac{X}{p^2}\Bigr)^{1/2+\eps}\Bigr),
\end{equation}
where the implied constant depends on $(\delta, \eps,\kappa, f,w )$,
and the constant $C_{\star}(\kappa)$ is given by
\begin{equation}\label{eq-cfk}
  C_{\star}(\kappa)=c_{\star,w}^{\kappa/2} \,m_{\kappa},
\end{equation}
with
\begin{equation}\label{mkappa}
m_{\kappa}=
\begin{cases}
  0 & \text{ if $\kappa$ is odd,}\\
  \\
  \displaystyle{\frac{\kappa!}{2^{\kappa/2}(\kappa/2)!}}&\text{ if
    $\kappa$ is even},
\end{cases}
\end{equation}
and 
\begin{gather}\label{eq-mk}
  c_{f,w}=\|w\|^2\|f\|^2 \frac{(4\pi)^{k}}{\Gamma(k)},\quad\quad
  c_{d,w}=P_w\Bigl(\log \frac{p^2}{X}\Bigr),
\end{gather}
for some polynomial $P_w(T)\in\Rr[T]$, depending only on $w$, of
degree $3$ with leading term $2\|w\|^2\pi^{-2}T^3$. Here, for a cusp
form $f$, the $L^2$-norm of $f$ is computed with respect to the
probability measure
$$
\frac{3}{\pi}\frac{dxdy}{y^2}
$$
on $\mathrm{SL}_2(\Zz)\backslash \mathbb{H}$, and the $L^2$-norm of
$w$ is computed with respect to the Lebesgue measure on $\Rr$.
\end{theorem}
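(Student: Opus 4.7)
The strategy is the method of moments. After detecting $n \equiv a \pmod p$ by additive characters modulo $p$ and isolating the $h=0$ Fourier contribution as the main term $M_\star(X,p)$, applying Voronoi summation (classical for $d$, Wilton--Voronoi for $f$) to each remaining Fourier component expresses $S_\star(X,p,a) - M_\star(X,p)$ as a dual sum
$$
\frac{X}{p^2}\sum_{m \geq 1}\rho_\star(m)\,W\!\Bigl(\frac{mX}{p^2}\Bigr)\,\mathrm{Kl}(\pm am;p),
$$
(possibly with two $\pm$ terms in the divisor case), where $\rho_\star(m)\in\{d(m),\rho_f(m)\}$, $W$ is a Bessel-type transform of $w$ rapidly decaying beyond $y\gg 1$, and $\mathrm{Kl}(\cdot;p)$ is the unnormalized Kloosterman sum modulo $p$. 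Writing $\mathrm{Kl}=\sqrt{p}\cdot\hypk$ in accordance with the Weil bound identifies
$$
E_\star(X,p,a)=\Bigl(\frac{X}{p^2}\Bigr)^{1/2}\sum_{m \geq 1}\rho_\star(m)\,W\!\Bigl(\frac{mX}{p^2}\Bigr)\,\hypk(\pm am;p),
$$
a weighted sum of normalized Kloosterman traces with $m$ effectively supported on $1\leq m\ll p^{2+\epsilon}/X$.

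Raising to the $\kappa$-th power and averaging over $a\in\Fpt$ yields
$$
\mathcal{M}_\star(X,p;\kappa) = \Bigl(\frac{X}{p^2}\Bigr)^{\kappa/2}\!\!\sum_{m_1,\ldots,m_\kappa}\prod_i\rho_\star(m_i)\,W\!\Bigl(\frac{m_iX}{p^2}\Bigr)\cdot\frac{1}{p-1}\sum_{a\in\Fpt}\prod_i\hypk(\pm am_i;p).
$$
By Deligne's Riemann Hypothesis over finite fields, combined with Katz's theorem that the monodromy groups of the pullbacks $[m]^*\KL$ for pairwise distinct $m\in\Fpt$ are jointly $\mathrm{SU}(2)^r$, the inner average satisfies, when the multiset $(m_1,\ldots,m_\kappa)$ has $r$ coincidence classes of sizes $n_1,\ldots,n_r$,
$$
\frac{1}{p-1}\sum_{a\in\Fpt}\prod_{i=1}^\kappa\hypk(am_i;p)=\prod_{j=1}^r\int_{\mathrm{SU}(2)}\mathrm{tr}(g)^{n_j}\,dg\;+\;O(p^{-1/2}).
$$
Since the $\mathrm{SU}(2)$ integral vanishes for $n=1$ and equals $1$ for $n=2$, only \emph{perfect pairings} of the $\kappa$ indices contribute to leading order; this forces $\kappa$ to be even and yields the combinatorial factor $m_\kappa$.

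The main term then factorizes as $m_\kappa\cdot\bigl((X/p^2)\cdot V\bigr)^{\kappa/2}$, where $V=\sum_m\rho_\star(m)^2W(mX/p^2)^2$ is a weighted variance. For $\star=f$, Rankin--Selberg asymptotics combined with the Hankel--Parseval identity for the transform $w\mapsto W$ give $(X/p^2)V\to c_{f,w}=\|w\|^2\|f\|^2(4\pi)^k/\Gamma(k)$. For $\star=d$, the classical asymptotic $\sum_{m\leq M}d(m)^2\sim MQ(\log M)$ with $Q$ cubic of leading coefficient $\pi^{-2}$, combined with partial summation and the Parseval-type identity $\|W_+\|^2+\|W_-\|^2=2\|w\|^2$ for the two Voronoi Bessel kernels, produces $(X/p^2)V\to P_w(\log(p^2/X))$ of degree $3$ with leading coefficient $2\|w\|^2\pi^{-2}$, in agreement with \eqref{eq-mk}.

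The principal obstacle is the error bookkeeping. For configurations that do not arise from a perfect pairing -- most dangerously the $r=\kappa$ ``diagonal'' of pairwise distinct $m_i$ -- the Deligne square-root saving $O(p^{-1/2})$ on the $a$-average, combined with the trivial bounds $|\rho_\star(m)|\ll m^\epsilon$ and $|W|\ll 1$ and the effective length $\ll p^{2+\epsilon}/X$ of the dual sum in each coordinate, yields a total contribution $\ll p^{-1/2+\epsilon}(p^2/X)^{\kappa/2}$, the first error term of \eqref{mark}. The second error term $(X/p^2)^{1/2+\epsilon}$ comes from the power-saving remainder in the asymptotic for $V$. The deepest ingredient is the monodromy-independence statement for \emph{arbitrary} $\kappa$ and arbitrary (not necessarily distinct) values of $m_i$, precisely the type of Katz-style result whose establishment occupies the central analytic sections of the paper.
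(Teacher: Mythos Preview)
Your proposal is correct and follows essentially the same route as the paper: Voronoi summation to express $E_\star$ as a Kloosterman-weighted dual sum (the paper's Proposition~2.1), expansion of the $\kappa$-th moment and evaluation of the $a$-average via Katz's monodromy independence for Kloosterman sheaves (Proposition~3.1), combinatorial isolation of the pair-partition terms giving the Gaussian moment $m_\kappa$, and computation of the variance factor through Rankin--Selberg (resp.\ $\zeta(s)^4/\zeta(2s)$) asymptotics combined with the Hankel/Kirillov-model Parseval identity (Proposition~2.2). Two small corrections: the Parseval identity in the divisor case is $\|W_+\|^2+\|W_-\|^2=\|w\|^2$, not $2\|w\|^2$---the extra factor of $2$ in the leading coefficient $2\pi^{-2}\|w\|^2$ comes instead from the dual sum ranging over both signs of $n$; and your justification that only perfect pairings contribute to leading order should also note that mirror configurations containing a block of size $\geq 4$ (for which the $\mathrm{SU}(2)$ integral is a nonzero Catalan number) are suppressed because they involve at most $\kappa/2-1$ free summation variables.
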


\begin{remark}
  In the case $\kappa=2$, and in the range $X^{1/2}\leq c\leq X$, Lau
  and Zhao~\cite[Theorem 1 (2)]{La-Zh} have obtained
\begin{equation}\label{19:56}
  \frac{1}c\sum_{a=1}^c \,\Bigl\vert    
  \frac{c^{1/2}}{X^{1/2}}\sum_{\substack{n \equiv a ( \textnormal{mod
      } c ) 
      \\1 \leq n \leq X}} \rho_f ( n ) 
  \Bigr\vert^2 =c_{f} + 
  O\Bigl(\Bigl(\frac{c}{X}\Bigr)^\frac{1}{6}d (c)+
  \Bigl(\frac{X}{c^2}\Bigr)^{\frac{1}{4}} 
  \sum_{\ell \mid c} \frac{\varphi (\ell)}{\ell}\Bigr),
\end{equation}
for any modulus $c\geq 1$ (not only primes), and a similar result for
the divisor function.
\end{remark}

We will make further comments on this result after the proof, in
Section~\ref{ssec-comments}. Since $m_{\kappa}$ is the $\kappa$-th
moment of a Gaussian random variable with mean $0$ and variance $1$,
we obtain the following, which implies Theorem~\ref{th-1-1} in the
case $\star=d$:

\begin{corollary}[Central limit theorem]
\label{cor-gaussian}
Let $\Phi(x)\geq 1$ be any real-valued function, such that
\begin{equation*}
\Phi(x)\lra +\infty\text{ as } x\ra +\infty,\quad\quad
\Phi(x)=O_\eps ( x^{\eps}),
\end{equation*}
for any $\eps>0$, uniformly for $x\geq 1$. For any prime $p$, let
$X=p^2/\Phi(p)$. Then as $p\ra +\infty$ over prime values, the random
variables
$$
a\mapsto \frac{E_{\star}(X,p,a)} {\sqrt{c_{\star,w}}}
$$
on $\Fpt$ converge in distribution to a standard Gaussian with mean
$0$ and variance $1$.
\end{corollary}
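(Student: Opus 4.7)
The plan is to apply the method of moments, using Theorem~\ref{funda} as input. Since the standard Gaussian distribution is uniquely determined by its moment sequence $(m_\kappa)_{\kappa\geq 1}$ (Carleman's criterion applies since $m_{2n}^{1/(2n)}=O(\sqrt{n})$), it is enough to show that, for every fixed integer $\kappa\geq 1$,
$$
\frac{1}{p-1}\sum_{a\in\Fpt}\Bigl(\frac{E_\star(X,p,a)}{\sqrt{c_{\star,w}}}\Bigr)^\kappa \ \underset{p\to\infty}{\longrightarrow}\ m_\kappa.
$$
First I would verify the range condition~\eqref{X1/2<p<X} for $X=p^2/\Phi(p)$: the inequality $X^{1/2}=p/\sqrt{\Phi(p)}\leq p$ is automatic from $\Phi(p)\geq 1$, while the slow-growth hypothesis $\Phi(p)=O_\eps(p^\eps)$ gives $X\geq p^{2-\eps}$, so that $p<X^{1-\delta}$ holds for any fixed $\delta>0$ once $\eps$ is chosen small and $p$ is large enough.

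Next, the empirical moment over $\Fpt$ differs from $\mathcal{M}_\star(X,p;\kappa)$ as defined in~\eqref{defM} only by the harmless factor $p/(p-1)\to 1$. With $p^2/X=\Phi(p)$ and $X/p^2=1/\Phi(p)$, the asymptotic~\eqref{mark} reads
$$
\mathcal{M}_\star(X,p;\kappa)=c_{\star,w}^{\kappa/2}\,m_\kappa+O\Bigl(p^{-1/2+\eps}\Phi(p)^{\kappa/2}+\Phi(p)^{-1/2-\eps}\Bigr),
$$
so after dividing by $c_{\star,w}^{\kappa/2}$ the error becomes
$$
c_{\star,w}^{-\kappa/2}\Bigl(p^{-1/2+\eps}\Phi(p)^{\kappa/2}+\Phi(p)^{-1/2-\eps}\Bigr).
$$
The second term tends to $0$ because $\Phi(p)\to+\infty$; the first, in view of $\Phi(p)=O_\eps(p^\eps)$, is bounded by $c_{\star,w}^{-\kappa/2}\,p^{-1/2+\eps(1+\kappa/2)}$, which also tends to $0$ once $\eps$ is chosen small enough given the fixed $\kappa$. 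In the cusp-form case $\star=f$, the prefactor $c_{f,w}^{-\kappa/2}$ is a fixed positive constant. In the divisor case $\star=d$, one has $c_{d,w}=P_w(\log\Phi(p))\sim 2\|w\|^2\pi^{-2}(\log\Phi(p))^3\to+\infty$, so $c_{d,w}^{-\kappa/2}$ is in fact small and only improves the error. Hence the $\kappa$-th moment converges to $m_\kappa$, and the method of moments delivers the claimed convergence in distribution.

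There is no genuine remaining obstacle: the analytic and algebro-geometric difficulty is entirely concentrated in Theorem~\ref{funda}, and the corollary is a clean deduction by the method of moments. The only point deserving a moment of care is the $p$-dependence of $c_{d,w}$ in the divisor case; but the fact that this normalization diverges with $p$, rather than tending to zero, means it cannot amplify the error from~\eqref{mark}, and the verification goes through uniformly for both choices of $\star$.
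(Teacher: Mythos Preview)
Your proposal is correct and follows essentially the same route as the paper: deduce convergence in distribution from Theorem~\ref{funda} via the method of moments, check the range~\eqref{X1/2<p<X} using $\Phi(p)=O_\eps(p^\eps)$, and observe that both error terms in~\eqref{mark} tend to zero under the given hypotheses on $\Phi$. Your treatment is in fact slightly more careful than the paper's, which silently absorbs the factor $c_{\star,w}^{-\kappa/2}$ into the error; your explicit remark that $c_{d,w}\to+\infty$ in the divisor case (so that this factor can only help) is a worthwhile clarification.
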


As far as we know, this is the first result of this type. We will
prove this in Section \ref{ProofTHM1.1}, and give further comments, in
Section~\ref{sec-gaussian}. 
\par
Among the natural generalizations of this result, we consider next the
following one: given a map $a\mapsto \gamma(a)$ on $\Fpt$, what is the
asymptotic joint distribution of
$$
a\mapsto (E_{\star}(X,p,a),E_{\star}(X,p,\gamma(a)))\ ?
$$
\par
We study this when $\gamma$ is given by a fractional linear
transformation. Precisely, let 
\begin{equation}\label{defgamma}
\gamma=
\begin{pmatrix}a&b \\c&d
\end{pmatrix}\in \mathrm{GL}_2(\Qq)\cap M_2(\Zz)
\end{equation}
be a fixed invertible matrix with integral coefficients.  For $p\nmid
\det \gamma$, the matrix $ \gamma$ has a canonical reduction modulo
$p$ in ${\rm PGL}_2 (\Fp)$, which we denote by $\pi_p (\gamma)$.  In
the usual manner, $\gamma$ (or $\pi_p (\gamma)$) defines a fractional
linear transformation on $\Pro^1_{\Fp}$ by
$$
z\in  \Pro^1_{\Fp}\mapsto \gamma \cdot z=\frac{az+b}{cz+d}.
$$ 
\par
By Corollary \ref{cor-gaussian}, we know that, in the range of
validity of this result, both 
\begin{equation}\label{defX}
  \rZ\,:\,a\mapsto \frac{E_{\star}(X,p,a)} {\sqrt{c_{\star,w}}}\ \text{
    and } \rZ\circ\gamma\,:\,
  a \mapsto  \frac{E_{\star}(X,p,\gamma \cdot a)} {\sqrt{c_{\star,w}}}, 
\end{equation}
seen as random variables defined on the set
$$
\{ a\in {\mathbb F}_p \ \vert\ a,
\gamma \cdot a\not=0,\infty\} 
$$
converge to the normal law. We then wish to know the asymptotic joint
distribution of the vector $(\rZ,\rZ\circ \gamma)$, and we study this
issue, as before, using moments.
\par
For $\kappa$ and $\lambda$ positive integers,
let
\begin{equation}\label{defMmix} 
{\mathcal M} _\star(X,p\,; \kappa ,
\lambda\,; \gamma):=\frac1p \sum_{\substack{a\in {\mathbb F}_p \\
    a,\ \gamma \cdot a\not=0,\infty}} E_\star (X,p,a)^{\kappa }\,
E_\star (X,p,\gamma\cdot a)^{\lambda},
\end{equation}
be the {\it mixed moment of order } $(\kappa, \lambda)$. % By symmetry,
% we have the equality
% $$
% {\mathcal M} _\star(X,p\,; \kappa , \lambda\,; \gamma)= {\mathcal
%   M}_\star(X,p\,; \lambda,\kappa\,; (\det \gamma)\, \gamma^{-1}),
% $$
% which allows us to restrict our study of these mixed moments to the
% cases where $\kappa\geq \lambda\geq 0$
\par
In analogy with Theorem~\ref{funda}, we will estimate these moments in
\S \ref{PROOFMIXED}. To state the result, we note that if $\gamma$ is
diagonal, there is a unique triple of integers $(\alpha_{\gamma},
\gamma_1, \gamma_{2})$, such that we have the {\it canonical form}
\begin{equation}
\label{canonical}
\gamma=\alpha_\gamma\begin{pmatrix}\gamma_1& 0\\
0& \gamma_{2}
\end{pmatrix}, \ \gamma_1 \geq 1 \text{ and } (\gamma_{1},\gamma_2)=1.
\end{equation}
\par
We further introduce the arithmetic functions
\begin{equation}\label{cold}
  \uple{\rho}_{a,f}=\prod_{p^\alpha \Vert a} \Bigl( \rho_f
  (p^\alpha) - \frac{\rho_{f}(p)\rho_{f}(p^{\alpha
      -1})}{p+1}\Bigr),\quad\quad
  \uple{\rho}_{a,d}=\prod_{p^\alpha \Vert a} \Bigl( d(p^\alpha) -
  \frac{d(p)d(p^{\alpha-1})}{p+1}\Bigr),
\end{equation}
for $a\geq 1$, and $\uple{\rho}_{a,f}=0$ for $a<0$,
$\uple{\rho}_{a,d}=\uple{\rho}_{-a,d}$ for $a<0$. For $\star=f$, we
also define the constant
\begin{equation}\label{defcf}
c_f= \|f\|^2 (4\pi)^k\Gamma(k)^{-1}.
\end{equation}

Our result is:

\begin{theorem}\label{MIXEDMOMENTS}  
  Let $\gamma$ be defined by~\emph{(\ref{defgamma}}).
\par
\emph{(1)} For every integers $\kappa$ and $\lambda$, for every
$\delta $ and $\eps >0$, for every prime $p\geq p_0 (\gamma)$ and $X $
satisfying~\emph{(\ref{X1/2<p<X})}, there exists
$C_{\star}(\kappa,\lambda,\gamma)$ such that
\begin{equation}\label{eq-mixed-conv}
  {\mathcal M} _\star(X,p\,;\kappa , \lambda\,; \gamma)= 
  C_{\star}(\kappa,\lambda,\gamma)
  + O\Bigl( p^{-\frac{1}{2} +\epsilon}\Bigl(
  \frac{p^2}{X}\Bigr)^{(\kappa+\lambda)/2} %+p^{-1 +2\epsilon}\Big(
%  \frac{p^2}{X}\Bigr)^\frac{\kappa +\lambda}{2}
  +\Bigl(\frac{X}{p^2}\Bigr)^{1/2+\eps} \Bigr).
\end{equation}
\par
\emph{(2)} If $ \gamma $ is non-diagonal, then
\begin{equation}\label{nondiag}
 C_{\star}(\kappa,\lambda,\gamma)=C_{\star}(\kappa) C_{\star}(
 \lambda).
\end{equation}
\par
\emph{(3)} If $ \gamma $ is diagonal, and written in the canonical
form~\emph{(\ref{canonical})}, then
\begin{equation}\label{diag}
C_{\star}(\kappa,\lambda,\gamma)=
\begin{cases}
  0\text{ if } \kappa+\lambda \text{ is odd,}
  \\
  \displaystyle{\sum_{\substack{0\leq \nu \leq \min(\kappa,\lambda)\\
        \nu\equiv \kappa \equiv \lambda\bmod 2}}} \nu!
  \binom{\kappa}{\nu}
  \binom{\lambda}{\nu}m_{\kappa-\nu}m_{\lambda-\nu}
  \,(c_{\star,w})^{\frac{\kappa+\lambda}{2}-\nu}\, (\tilde c_{\star,
    w,\gamma})^\nu,\text{ otherwise,}
\end{cases}
\end{equation}
where
$$
\tilde c_{f,w,\gamma}=c_f\uple{\rho}_{\gamma_{1}\gamma_{2}, f}\Bigl(
\int_{-\infty}^\infty w(\gamma_{1}t) w(\gamma_{2}t) dt\Bigr),
$$
and for $\star=d$, we have
$$
\tilde c_{d, w,\gamma}= P_{\gamma_{1}\gamma_{2},w}\Bigl( \log
\frac{p^2}{X}\Bigr)
$$
for some polynomial $P_{\gamma_{1}\gamma_{2},w}(T)\in\Rr[T]$, of
degree $\leq 3$ and with coefficient of $T^3$ given by
$$
\frac{2}{\pi^2}\uple{\rho}_{\gamma_{1}\gamma_{2},d}\Bigl(
\int_{-\infty}^\infty w(\gamma_{1}t) w(\gamma_{2}t) dt\Bigr)T^3.
$$
\par
In~\emph{(\ref{eq-mixed-conv})}, the implied constant depends at most
on $(\gamma, \delta, \varepsilon,\kappa,\lambda)$, and
in~\emph{(\ref{diag})}, we make the convention that $0^{\nu}=1$ if
$\nu =0$.
\end{theorem}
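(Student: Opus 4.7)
The plan is to follow the proof of Theorem~\ref{funda} step by step and isolate the new features brought by the twist by $\gamma$. First, apply Voronoi summation to both $S_\star(X,p,a)-M_\star(X,p)$ and $S_\star(X,p,\gamma\cdot a)-M_\star(X,p)$, expressing each of $E_\star(X,p,a)$ and $E_\star(X,p,\gamma\cdot a)$ as a rapidly convergent sum of the shape
$$
\frac{1}{(X/p)^{1/2}\,p}\sum_{n\geq1}\lambda_\star(n)\,W\!\Bigl(\frac{nX}{p^2}\Bigr)\,\hypk(na;p),
$$
(resp.\ the same with $a$ replaced by $\gamma\cdot a$), where $\lambda_\star$ is $d$ or $\rho_f$ and $W$ is a test function derived from $w$ through a Bessel-type transform. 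Raising to the powers $\kappa$ and $\lambda$, multiplying, and swapping summations, the mixed moment $\mathcal{M}_\star(X,p;\kappa,\lambda;\gamma)$ becomes a sum over $(n_1,\dots,n_\kappa,m_1,\dots,m_\lambda)\in\Zz^{\kappa+\lambda}$ of arithmetic weights $\prod_i\lambda_\star(n_i)\prod_j\lambda_\star(m_j)$, times the normalized complete character sum
$$
\mathcal{C}(\uple{n},\uple{m};\gamma)=\frac1p\sum_{\substack{a\in\Fp\\ a,\,\gamma\cdot a\ne0,\infty}}\prod_{i=1}^{\kappa}\hypk(n_ia;p)\prod_{j=1}^{\lambda}\hypk(m_j(\gamma\cdot a);p).
$$
Collecting the powers of $(X/p)^{1/2}$ in the normalization produces the prefactor $(p^2/X)^{(\kappa+\lambda)/2}$ that appears in the error term of~\eqref{eq-mixed-conv}.

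The geometric core of the proof is the evaluation of $\mathcal{C}(\uple{n},\uple{m};\gamma)$ via the Riemann Hypothesis over finite fields, applied to tensor products of the Kloosterman sheaf $\KL$ on $\Gg_{m,\Fp}$ and of its pullback by the automorphism $\pi_p(\gamma)$ of $\Pro^1_{\Fp}$. When $\gamma$ is non-diagonal, the monodromy independence results from earlier in the paper force the space of geometric invariants of this tensor product to be spanned by Wick pairings taking place only inside the $n$-family and inside the $m$-family, with no new invariant arising from cross-contractions between the two families. Consequently, the main term of $\mathcal{M}_\star(X,p;\kappa,\lambda;\gamma)$ factors as the product of the two single-variable main terms coming from Theorem~\ref{funda}, giving $C_\star(\kappa)C_\star(\lambda)$ as in~\eqref{nondiag}; the error term is then controlled by exactly the same square-root cancellation estimate used to prove~\eqref{mark}, now with $\kappa+\lambda$ Voronoi variables in place of $\kappa$. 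This settles part~(2).

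When $\gamma$ is diagonal, written in canonical form as $\alpha_\gamma\,\mathrm{diag}(\gamma_1,\gamma_2)$, one has $\gamma\cdot a=(\gamma_1\overline{\gamma_2})a$ in $\Fp$, and $\mathcal{C}(\uple{n},\uple{m};\gamma)$ collapses to a moment of $\hypk$-products in a single multiplicative variable with frequencies $n_i$ on one side and $m_j\gamma_1\overline{\gamma_2}$ on the other. The standard within-family pairings $n_i\equiv\pm n_{i'}$ and $m_j\equiv\pm m_{j'}\pmod p$ still produce the factors $m_{\kappa-\nu}m_{\lambda-\nu}c_{\star,w}^{(\kappa+\lambda)/2-\nu}$, but now \emph{cross} pairings $n_i\gamma_2\equiv\pm m_j\gamma_1\pmod p$ are also geometrically admissible and contribute. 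Fixing $\nu$ with $0\le\nu\le\min(\kappa,\lambda)$, there are $\binom{\kappa}{\nu}\binom{\lambda}{\nu}\nu!$ ways of selecting such cross pairs; for each paired slot, the coprimality $(\gamma_1,\gamma_2)=1$ forces $n_i=\gamma_1 k$, $m_j=\gamma_2 k$ for a common integer $k$, and summation over $k$ against the weight $\lambda_\star(\gamma_1 k)\lambda_\star(\gamma_2 k)$ yields, through a Rankin--Selberg/contour computation at the pole, the arithmetic factor $\uple{\rho}_{\gamma_1\gamma_2,\star}$ together with the integral $\int w(\gamma_1 t)w(\gamma_2 t)\,dt$ (for $\star=f$) or the polynomial $P_{\gamma_1\gamma_2,w}(\log(p^2/X))$ of the prescribed cubic leading behaviour (for $\star=d$). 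Assembling the three layers of pairings reproduces the formula~\eqref{diag}.

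The main obstacle is the monodromy input needed for part~(2): one must establish that $\KL$ and its pullback by a non-diagonal M\"obius transformation on $\Pro^1_{\Fp}$ are geometrically non-isomorphic, up to scalar twists, so that all relevant tensor moments have only the expected within-family pairings as geometric invariants. This is the function of the Kloosterman-sheaf monodromy computations developed elsewhere in the paper. Everything else --- the Wick combinatorics that produces the $m_{\kappa-\nu}m_{\lambda-\nu}$ factors, the Rankin--Selberg identification of $\uple{\rho}_{\gamma_1\gamma_2,\star}$ in the diagonal case, and the error estimate in~\eqref{eq-mixed-conv} --- is a direct adaptation of the corresponding steps in the proof of Theorem~\ref{funda}.
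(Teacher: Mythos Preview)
Your outline matches the paper's proof closely: Voronoi expansion of both factors, reduction to the character sum $\mathcal{C}(\uple{n},\uple{m};\gamma)$, evaluation via the monodromy result (Proposition~\ref{390}), and then the combinatorics of mirror configurations split into within-family and cross pairings. The identification of the key obstacle---showing that $\KL$ and $\gamma^*\KL$ are not geometrically isomorphic up to a rank-one twist when $\gamma$ is non-diagonal---is exactly the Claim inside the proof of Proposition~\ref{390}.

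One correction: the pairing conditions are $n_i\equiv n_{i'}$, $m_j\equiv m_{j'}$, and $\gamma_2 n_i\equiv \gamma_1 m_j\pmod p$, \emph{without} the $\pm$ signs you wrote. The Kloosterman sheaf is not geometrically isomorphic to its pullback under $a\mapsto -a$ (this is the diagonal case of the Claim, with $\sigma=h_{-1}$), so two factors $\hypk(n_i a;p)$ and $\hypk(n_{i'}a;p)$ contribute a geometric invariant only when $n_i\equiv n_{i'}\pmod p$, not when $n_i\equiv -n_{i'}$. Since all $|n_i|<p/2$ after the tail truncation, this becomes exact equality $n_i=n_{i'}$, and similarly for the cross condition. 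Dropping the spurious $\pm$ does not change the combinatorial count you wrote down, so the rest of your argument goes through unchanged.
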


Of course, if $\gamma$ is the identity, we recover Theorem
\ref{funda}.  More generally, we can now determine the joint
asymptotic distribution of $(\mathrm Z,\mathrm Z \circ \gamma)$ in the
same range as Corollary~\ref{cor-gaussian}.

\begin{corollary}\label{gaussianvector}
  Let $\Phi$ be a function as in Corollary \ref{cor-gaussian}, and let
  $X=p^2/\Phi (p)$.  Then, for $\star =f $ or $d$, as $p$ tends to
  infinity, the random vector $(\mathrm Z, \mathrm Z \circ \gamma)$
  converges in distribution to a centered Gaussian vector with
  covariance matrix
\begin{gather}
\begin{pmatrix} 1 &  0\\
  0&1
\end{pmatrix},\quad\quad\text{ if } \gamma \text{ is not diagonal}.
\\
\begin{pmatrix} 1 & \cov_{\star,\gamma,w}\\
  \cov_{\star,\gamma,w}&1
\end{pmatrix},\quad\quad\text{ if } \gamma \text{ is diagonal},
\label{eq-cor-diag}
\end{gather}
where the covariance $\cov_{\star,\gamma,w}$ is given by
$$
\cov_{\star,\gamma,w}=\frac{\rho_{\gamma_{1}\gamma_{2},\star}}{\|w\|^2}
\int_{\Rr}{w(\gamma_{1}t)w(\gamma_{2}t)dt}.
$$
\end{corollary}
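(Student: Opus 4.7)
The plan is to apply the classical multivariate method of moments, using Theorem~\ref{MIXEDMOMENTS} as the workhorse. First, for each fixed pair $(\kappa,\lambda)$ of non-negative integers, I would express the joint moment of the normalized vector as
$$
\Ee[\mathrm{Z}^\kappa (\mathrm{Z}\circ\gamma)^\lambda] =
\frac{\mathcal{M}_\star(X,p\,;\kappa,\lambda\,;\gamma)}{c_{\star,w}^{(\kappa+\lambda)/2}}
+ O_{\gamma,\kappa,\lambda}(p^{-1}),
$$
the $O(p^{-1})$ accounting for the $O_\gamma(1)$ excluded residue classes where $a$ or $\gamma\cdot a$ hits $0$ or $\infty$ (the individual summands being bounded by a fixed power of $\Phi(p)$ for fixed $\kappa,\lambda$). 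By Theorem~\ref{MIXEDMOMENTS}, the main term equals $C_\star(\kappa,\lambda,\gamma)/c_{\star,w}^{(\kappa+\lambda)/2}$ up to an error dominated, after normalization, by
$$
p^{-1/2+\eps}\Phi(p)^{(\kappa+\lambda)/2}c_{\star,w}^{-(\kappa+\lambda)/2}
+\Phi(p)^{-1/2-\eps}c_{\star,w}^{-(\kappa+\lambda)/2}.
$$
Since $\Phi(p)=O_\eps(p^\eps)$, and since $c_{f,w}$ is a positive constant while $c_{d,w}\sim 2\|w\|^2\pi^{-2}(\log\Phi(p))^3\to\infty$, both terms tend to $0$ for every fixed $\kappa,\lambda$.

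Second, I would compute the limit of $C_\star(\kappa,\lambda,\gamma)/c_{\star,w}^{(\kappa+\lambda)/2}$ case by case. In the non-diagonal case, the factorization~(\ref{nondiag}) combined with $C_\star(\kappa)=c_{\star,w}^{\kappa/2}m_\kappa$ immediately gives $m_\kappa m_\lambda$, which is exactly the joint $(\kappa,\lambda)$-moment of two independent standard Gaussians. In the diagonal case, the ratio $\tilde{c}_{\star,w,\gamma}/c_{\star,w}$ converges to $\cov_{\star,\gamma,w}$: this is immediate from the definitions when $\star=f$, and follows from comparing the leading cubic coefficients of the polynomials $P_w$ and $P_{\gamma_1\gamma_2,w}$ in $\log(p^2/X)=\log\Phi(p)$ when $\star=d$ (including the degenerate case where the leading coefficient of $P_{\gamma_1\gamma_2,w}$ vanishes, which forces $\cov_{d,\gamma,w}=0$ as well). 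Substitution into~(\ref{diag}) then yields
$$
\lim_{p\to\infty}\Ee[\mathrm{Z}^\kappa (\mathrm{Z}\circ\gamma)^\lambda] =
\sum_{\substack{0\leq\nu\leq\min(\kappa,\lambda)\\ \nu\equiv\kappa\equiv\lambda\bmod 2}}
\nu!\binom{\kappa}{\nu}\binom{\lambda}{\nu}m_{\kappa-\nu}m_{\lambda-\nu}\,\cov_{\star,\gamma,w}^\nu.
$$

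Third, I would identify this combinatorial sum with the $(\kappa,\lambda)$-joint moment of the claimed bivariate Gaussian via Wick's (Isserlis') theorem: for a centered Gaussian pair $(U,V)$ with unit variances and covariance $\rho$, $\Ee[U^\kappa V^\lambda]$ equals the sum over perfect matchings of a labelled set of $\kappa+\lambda$ points (with $\kappa$ of type $U$ and $\lambda$ of type $V$) of the product of pairwise covariances; stratifying by the number $\nu$ of mixed $UV$-pairs, and using the identity $m_r=r!/(2^{r/2}(r/2)!)$ for perfect matchings of $r$ same-type points, reproduces the above formula with $\rho=\cov_{\star,\gamma,w}$. To conclude I would invoke the standard fact that a Gaussian vector is determined by its joint moments; equivalently one can apply the Cram\'er-Wold device to each linear combination $s\mathrm{Z}+t(\mathrm{Z}\circ\gamma)$, reducing to the univariate method of moments, which is valid for Gaussians by Carleman's criterion. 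The genuine analytic content has already been absorbed into Theorem~\ref{MIXEDMOMENTS}; the only residual obstacles here are the careful combinatorial identification with Wick's formula, and the asymptotic comparison of the two cubic polynomials in $\log\Phi(p)$ needed for the divisor case — both routine.
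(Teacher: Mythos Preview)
Your proposal is correct and follows essentially the same route as the paper: deduce convergence of all mixed moments from Theorem~\ref{MIXEDMOMENTS}, check that the normalized main term $C_\star(\kappa,\lambda,\gamma)/c_{\star,w}^{(\kappa+\lambda)/2}$ converges (via the ratio $\tilde c_{\star,w,\gamma}/c_{\star,w}\to\cov_{\star,\gamma,w}$ in the diagonal case), and then invoke the method of moments. The only cosmetic difference is in the final identification of the limiting moments with those of the target Gaussian vector: the paper computes the two-variable exponential generating series and recognizes $\exp(U^2/2+\cov\,UV+V^2/2)$, whereas you use Wick's formula stratified by the number of mixed pairs---these are equivalent standard computations.
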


Recall that a pair $(\mathrm X,\mathrm Y)$ of random variables is a
{\it Gaussian vector } if and only, for every complex numbers $\alpha$
and $\beta$, the random variable $\alpha \mathrm X+\beta \mathrm Y$
has a Gaussian distribution (see, e.g., \cite[pp. 121--124]{JaPr}). If
$(\mathrm X,\mathrm Y)$ is a Gaussian vector, its covariance matrix
$\mathrm {cov}(\mathrm X, \mathrm Y)$ is defined by
\begin{equation}\label{covXY}
  \mathrm {cov}(\mathrm X, \mathrm Y)=
 \begin{pmatrix}
   \Ee (\mathrm X^2)-\Ee(\mathrm{X})^2 &\Ee (\mathrm X\mathrm
   Y)-\Ee(\mathrm X)\Ee(\mathrm Y) \\
   \Ee (\mathrm X\mathrm Y)-\Ee(\mathrm X)\Ee(\mathrm Y)& \Ee (\mathrm
   Y^2)-\Ee(\mathrm Y)^2
 \end{pmatrix},
\end{equation}
where $\Ee$ denotes the expectation of a random variable. Recall also
that a Gaussian vector $(\mathrm X, \mathrm Y) $ has independent
components if and only if $\Ee (\mathrm X \mathrm Y)= \Ee (\mathrm X)
\Ee (\mathrm Y)$, i.e., if the covariance matrix is diagonal (see
\cite[Theorem 16.4]{JaPr} for instance). Thus from Corollary
\ref{gaussianvector} (noting that $\uple{\rho}_{a,d}\not=0$ for any
integer $a\not=0$), we get a criterion for asymptotic independence of
$(\rZ,\rZ\circ \gamma)$:

\begin{corollary}\label{final}  
  We adopt the notations and hypotheses of Corollary
  \ref{gaussianvector}.  Then as $p$ tends to $\infty$, the random
  variables $\mathrm Z$ and $\mathrm Z \circ \gamma$ tend to
  independent Gaussian random variables, if and only if one of the
  following conditions holds:
\par
\emph{(1)} If $\gamma$ is not a diagonal matrix, i.e., $a\mapsto
\gamma\cdot a$ is not a homothety,
\par
\emph{(2)} If $\gamma$ is a diagonal matrix and $$\int_{-\infty
}^\infty w(\gamma_{1}t) w(\gamma_{2}t) \, dt =0,$$
\par
\emph{(3)} If $\star=f$, $\gamma$ is a diagonal matrix in the
from~\emph{(\ref{canonical})}, and there exists a prime $p$ and
$\alpha\geq 1$ such that $p^\alpha \Vert \gamma_{2}\gamma_{1}$ and
such that
$$
(p+1) \rho_{f}(p^\alpha) = \rho_{f }(p) \rho_{f }(p^{\alpha -1}).
$$
\end{corollary}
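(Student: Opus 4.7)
The plan is to deduce Corollary~\ref{final} directly from Corollary~\ref{gaussianvector} combined with the standard fact, recalled just before the statement (\cite[Theorem~16.4]{JaPr}), that a centered Gaussian vector has independent components if and only if its covariance matrix is diagonal. By Corollary~\ref{gaussianvector}, the random vector $(\rZ, \rZ\circ\gamma)$ converges in distribution to a centered Gaussian vector whose covariance matrix is explicitly given; asymptotic independence is thus equivalent to the vanishing of the off-diagonal entry of that limiting covariance matrix.

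If $\gamma$ is not diagonal, the covariance matrix produced by Corollary~\ref{gaussianvector} is already the identity, yielding case~(1) unconditionally. When $\gamma$ is diagonal, the off-diagonal entry is
$$
\cov_{\star,\gamma,w}=\frac{\uple{\rho}_{\gamma_{1}\gamma_{2},\star}}{\|w\|^2}\int_{\Rr}w(\gamma_{1}t)w(\gamma_{2}t)\,dt,
$$
so asymptotic independence is equivalent to the vanishing of at least one of these two factors. The vanishing of the integral is precisely condition~(2).

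To analyze when $\uple{\rho}_{\gamma_{1}\gamma_{2},\star}=0$, I would use the multiplicative expression~\eqref{cold}: this product vanishes exactly when some local factor vanishes. For $\star=f$, this is precisely the existence of a prime $p$ and $\alpha\geq 1$ with $p^\alpha\Vert \gamma_{1}\gamma_{2}$ satisfying $(p+1)\rho_{f}(p^\alpha)=\rho_{f}(p)\rho_{f}(p^{\alpha-1})$, which is condition~(3). For $\star=d$, the corresponding local factor equals $d(p^\alpha)-\tfrac{d(p)d(p^{\alpha-1})}{p+1}=(\alpha+1)-\tfrac{2\alpha}{p+1}\geq (\alpha+1)-\tfrac{2\alpha}{3}=\tfrac{\alpha+3}{3}>0$; hence $\uple{\rho}_{a,d}\neq 0$ for every nonzero integer $a$, so only condition~(2) can produce independence for the divisor function. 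This positivity check is the only mildly delicate (but routine) point of the argument, and it is precisely what explains the asymmetry between the $f$-case and the $d$-case in the statement; the rest is a formal consequence of the preceding material.
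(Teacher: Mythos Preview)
Your argument is correct and follows the same route as the paper: the corollary is deduced directly from Corollary~\ref{gaussianvector} together with the fact that a centered Gaussian vector has independent components if and only if its covariance matrix is diagonal, reducing everything to the vanishing of $\cov_{\star,\gamma,w}$. The paper only records the key observation parenthetically (``noting that $\uple{\rho}_{a,d}\not=0$ for any integer $a\not=0$''), whereas you supply the explicit local computation $(\alpha+1)-\tfrac{2\alpha}{p+1}>0$; this is a harmless elaboration of the same idea.
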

 
\begin{remark} 
  (1) Corollary \ref{final} shows for instance that, for $p\ra
  \infty$, the random variables $a \mapsto E_{\star}(p^2/\Phi (p),
  p,a)$ and $a \mapsto E_{\star}(p^2/\Phi (p), p, \gamma \cdot a)$
  converge to independent Gaussian variables, if $\gamma$ is one of
  the following functions
$$
\gamma \cdot a= a+1,\quad\quad
\gamma \cdot a =-a,\quad\quad 
\gamma \cdot a=1/a.
$$
\par
The case of $\gamma\cdot a= 2a$ is more delicate, since it depends on
the value of the integral $\int_{0}^{+\infty } w(t) w(2t) \, dt$. For
instance, this integral is zero when one has the inequalities $w_{0}<
w_{1}<2w_{0}<2w_{1}$, where as before $\mathrm{supp}(w)\subset [w_{0},
w_{1}]$. The possible dependency here reflects the obvious fact that
if $n\equiv a\bmod{p}$ and $d\mid n$, then $2n\equiv 2a\bmod{p}$ and
$d\mid 2n$.
\par
(2) We do not know if any primitive Hecke form $f$ of level $1$ exists
for which Condition (3) in this last corollary holds for some
$p^\alpha$!  Certainly the ``easiest'' way it could apply would be if,
for some $p$, we had $\rho_f(p)=0$, but the existence of a primitive
cusp form of level $1$ and a prime $p$ with $\rho_f(p)=0$ seems
doubtful (e.g., a conjecture of Maeda suggests that the characteristic
polynomials of the Hecke operators $T(p)$ in level $1$ are
irreducible.) On the other hand, if we extend the result to forms of
fixed level $N\geq 1$, it is possible to have $\rho_f(p)=0$ for some
$p$ (e.g., for weight $k=2$ and $f$ corresponding to an elliptic
curve.)
\end{remark}

\subsection{Sketch of the proof}

We will sketch the proof in the case of cusp forms, which is
technically a bit simpler, though we present the actual proofs in a
unified manner.  For Theorem~\ref{funda}, the crucial starting point  is the
Voronoi summation formula, as in~\cite{Bl,La-Zh}, which expresses
$E_f(X, c,a)$ for any $c\geq 1$ in terms of sums weighted by some
smooth function of the Fourier coefficients $\rho_f(n)$ twisted by
Kloosterman sums $S(a,n;c)$. One then sees that the main
contribution to this sum comes from the $n$ of size roughly $Y=c^2/X$
(see Proposition \ref{stack1}).
% For a presentation of this technique, see
% \cite[p. 279--280]{Bl} for instance.
% An important tool of our proofs is the Kloosterman sum
% $$
% S(a,b;c) :=\sum_{\substack{t\bmod c\\ (t,c)=1}} e\Bigl(
% \frac{at+b\overline{t}}{c}\Bigr),
% $$
% where $a$, $b$ and $c$ are integers with $c\geq 1$ and where
% $\overline t$ is the multiplicative inverse of $t\bmod c$.  As usual,
% $e(z):= \exp (2\pi iz)$.
% Our first step is to express $E_f(X, c,a)$ in terms of a sum, over
% $n$, of Kloosterman sums $S(a,n;c)$ multiplied by $\rho_f (n) $ and
% by some smooth function of $n$. This is achieved via Voronoi formula
% (see Lemma \ref{Voronoi1}).  We then see that the main contribution of
% this sum comes from the $n$ around $c^2/X$ (see Proposition
% \ref{stack1}).  For a presentation of this technique, see
% \cite[p. 279--280]{Bl} for instance.
\par
Considering the $\kappa$-th moment, we obtain therefore an average
over $a\bmod p$ of a product of $\kappa$ Kloosterman sums $S(a,
n_{i};p)$, where all variables $n_i$ are of size approximately
$p^2/X$. The sum over $a\in \Fpt$, when the  variables $n_{i}$ are fixed, can be
evaluated using deep results on the independence of Kloosterman
sheaves (see Proposition~\ref{390}). This allows us to gain a factor
$p^{1/2}$ compared with a direct application of the Weil bound for
Kloosterman sums, except for special, well-understood, configurations
of the $n_i$ modulo $p$. These configurations lead, by combinatorial
arguments, to the Gaussian main term of Theorem~\ref{funda}.  (Note
that we can take no advantage of the summation over the variables
$n_i$, which turn out to have a short range in the cases where our
result is non-trivial, see Section~\ref{ssec-comments}.)

The study of mixed moments (see Theorem \ref{MIXEDMOMENTS}) has a lot
of similarities.  The only significant difference lies in the study of
the independence of Klosterman sheaves, when some of them are twisted
by the rational transformation $\gamma$.  However, Proposition
\ref{390} is general enough to show that these sheaves are dependent
if and only if we are in the ``obvious'' cases.  The main terms then
require some computations of integrals using properties of the Bessel
transforms.

\subsection{Possible extensions} 

A Gaussian law similarly appears if one studies the random variable
$a\mapsto E_\star(X,p, P(a))$, where $P$ is a non--constant fixed
polynomial with integer coefficients. The fact that $P$ is not
necessarily a bijection on $\Ff_p$ does not affect the Gaussian
behavior. The proof of this extension requires a suitable
generalization of Proposition \ref{390}. % Similarly, using the results
% of~\cite{FKM2}, we can consider the random variables $a\mapsto
% E_{\star}(X,p,a)$ where the residue class $1\leq a< p$ is now
% restricted to be itself a prime number.

It also seems that the present method can be extended to the study of
the distribution of sums of the shape
$$
a\mapsto S_\star(X,p,K_a)=\sum_{n\geq
  1}\tau_\star(n)K_a(n)w\Bigl(\frac{n}X\Bigr)
$$
where $\tau_\star(\cdot)$ is either $d(\cdot)$ or $\rho_f(\cdot)$, and
$K_a(n)=K(an)$ for a fairly general trace function $K$ as
in~\cite{FKM}. The shape of the analogue of Theorem \ref{funda} would
then depend on the nature of the geometric monodromy group of a
suitable ``Bessel transform'' of the sheaves underlying $K(\cdot)$.

Another natural extension, which we are currently considering, is that
of coefficients of cusp forms on higher-rank groups, and of higher
divisor functions.
  
\subsection{Notations} 

We use synonymously the notation $f(x)\ll g(x)$ for $x\in X$ and
$f=O(g)$ for $x\in X$. We denote $e(z)=e^{2i\pi z}$ for $z\in\Cc$. For
$c\geq 1$ and $a$, $b$ integers, or congruence classes modulo $c$, the
Kloosterman sum $S(a,b;c)$ is defined by
$$
S(a,b;c)=\sum_{\substack{x\bmod
    c\\(x,c)=1}}{e\Bigl(\frac{ax+b\bar{x}}{c}\Bigr)}
$$
where $\bar{x}$ is the inverse of $x$ modulo $c$. The normalized
Kloosterman sum is defined by
$$
\hypk(a,b;c)=\frac{S(a,b;c)}{c^{1/2}},
$$
and for $(a,b,c)=1$ it satisfies the Weil bound
\begin{equation}\label{eq-weil}
  |\hypk(a,b;c)|\leq d(c).
\end{equation}
To lighten notations, we define 
$$
\hypk(a;c):=\hypk(a,1;c),
$$
and recall the equality $\hypk (a,b;c)=\hypk (ab;c)$, whenever $(b,c)=1.$
\par
We will use the Bessel functions $J_{k-1}$, where $k\geq 2$ is an
integer, $Y_0$ and $K_0$; precise definitions can be found for
instance in~\cite[App. B.4]{Iwvert} and in \cite{Wa08}.

%%\subsection{Acknowledgements.} 

\section{Preliminaries}

We gather in this section some facts we will need during the proof of
the main results.  The reader may wish to skip to
Section~\ref{ProofTHM1.1} and refer to the results when they are
needed.
\par
We begin with the Voronoi formula in the form we need:
  
\begin{proposition}[Voronoi summation]\label{stack1}
  Let $\star=f$, for a cusp form $f$ of level $1$ and weight $k$, or
  $\star=d$. Let $c$ be any positive integer, with $c$ prime if
  $\star=d$. Then for any $X \geq 1$ and for any integer $a$, we have
  the equality
\begin{equation}\label{basic}
E_{\star}(X, c, a) = \frac{X^{1/2}}{c} \sum_{\substack{c_1 |c\\c_1 > 1}} 
\Bigl(\frac{c}{c_1}\Bigr)^{1/2} \sum_{n\not=0}
\tau_{\star}(n) W_{\star} \Bigl(\frac{nX}{c_1^2}\Bigr)
\hypk(  a,  n; c_1 ),
\end{equation}
where $n$ runs on the right over non-zero integers in $\Zz$ and
\begin{align}
\tau_{f}(n)&=\begin{cases}
\rho_f(n)&\text{ if } n\geq 1,\\
0&\text{ otherwise,}
\end{cases}\label{tauf}
\\
\tau_d(n)&=d(|n|),
\end{align}
and
\begin{align}\label{Bessel1}
  W_f(y) &=2\pi i^k \int_0^\infty w(u)J_{k-1}(4\pi \sqrt{ uy}) d
  u\quad
  \text{ for } y>0,\\
  W_f(y)&=0,\quad \text{ for } y<0,\nonumber\\
  W_d(y)&=-2\pi \int_0^\infty w(u)Y_0(4\pi \sqrt{ uy}) d u,\quad
  \text{ for } y>0,\label{Bessel2}\\
  W_d(y)&=4 \int_0^\infty w(u)K_0(4\pi \sqrt{ u|y|}) d u,\quad
  \text{ for } y<0.\label{Bessel3}
\end{align}
\par
In particular, if $c=p$, a prime, we have
\begin{equation}\label{basicp}
  E_{\star}(X, p,a ) = \Bigl(\frac{X}{p^2}\Bigr)^{1/2}  \sum_{  n
   \not= 0} \tau_{\star} ( n ) W_{\star}
  \Bigl(\frac{nX}{p^2}\Bigr) \hypk ( a,  n; p ).
\end{equation}
\end{proposition}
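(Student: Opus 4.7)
The starting point is to detect the congruence $n\equiv a\pmod{c}$ by additive characters:
\begin{equation*}
S_\star(X,c,a)=\frac{1}{c}\sum_{h\bmod c}e\Bigl(-\frac{ha}{c}\Bigr)\sum_{n\geq 1}\tau_\star(n)\,e\Bigl(\frac{hn}{c}\Bigr)w\Bigl(\frac{n}{X}\Bigr),
\end{equation*}
with $\tau_f$ as in \eqref{tauf} and $\tau_d(n)=d(n)$. The term $h=0$ equals $\frac{1}{c}\sum_n\tau_\star(n)w(n/X)$, which is exactly $M_f(X,c)$ in the cusp-form case (since by \eqref{aze} the $L$-function contributes no polar term), and which gives the first summand of $M_d(X,p)$ in the divisor case.

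For each nonzero residue $h\bmod c$ I write uniquely $h\equiv (c/c_1)h_1\pmod{c}$ with $c_1\mid c$, $c_1>1$, and $h_1\in(\Zz/c_1\Zz)^\times$, so that $e(hn/c)=e(h_1n/c_1)$. To each inner sum $\sum_n\tau_\star(n)e(h_1n/c_1)w(n/X)$ I apply the classical Voronoi summation formula of level $1$ (for a cusp form of weight $k$, respectively for $d(n)$): this turns it into a dual sum over $m\neq 0$ of $\tau_\star(m)\,e(\pm\overline{h_1}m/c_1)$ weighted by a Bessel integral of $w(\cdot/X)$, with kernel $2\pi i^kJ_{k-1}$ for $\star=f$ (only $m>0$ contributing), and with kernels $-2\pi Y_0$ for $m>0$ and $4K_0$ for $m<0$ in the divisor case. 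The substitution $x=Xu$ in the integrals converts the kernel argument to $4\pi\sqrt{u\cdot|m|X/c_1^2}$, so that each transform factors cleanly as $\frac{X}{c_1}W_\star(mX/c_1^2)$, matching exactly \eqref{Bessel1}--\eqref{Bessel3}.

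Summing over $h_1\in(\Zz/c_1\Zz)^\times$, the product $e(-h_1a/c_1)\,e(\pm\overline{h_1}m/c_1)$ yields the Kloosterman sum $S(-a,\pm m;c_1)$; applying the symmetry $S(-a,-b;c)=S(a,b;c)$ (coming from $x\mapsto -x$) together with the conventions $\tau_f(m)=0$ for $m\leq 0$ and $\tau_d(m)=d(|m|)$ allows me to rewrite the collected double sum as $\sum_{n\neq 0}\tau_\star(n)W_\star(nX/c_1^2)S(a,n;c_1)$. Collecting the prefactors $\tfrac{1}{c}$ (character expansion), $\tfrac{X}{c_1}$ (Voronoi integral change of variables) and $c_1^{1/2}$ (from $S=c_1^{1/2}\hypk$), and dividing by $(X/c)^{1/2}$ to pass from $S_\star-M_\star$ to $E_\star$, produces the prefactor $\frac{X^{1/2}}{c}(c/c_1)^{1/2}$ in \eqref{basic}; summing over $c_1$ completes the identity.

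The only real subtlety, which I view as the main technical point, is the extra main term in the Voronoi formula for $d(n)$ coming from the double pole of $\zeta(s)^2$ at $s=1$, of the form $\frac{1}{c_1}\int_0^\infty(\log(x/c_1^2)+2\gamma)w(x/X)\,dx$ for each pair $(c_1,h_1)$. Summing over $h_1\in(\Zz/c_1\Zz)^\times$ with weight $e(-h_1a/c_1)$ introduces the Ramanujan sum $c_{c_1}(-a)$; for $c=p$ prime and $a\in\Fpt$ only $c_1=p$ contributes and $c_p(-a)=-1$, so the total residual contribution equals $-\frac{1}{p^2}\int(\log x+2\gamma-2\log p)w(x/X)\,dx$, exactly the second summand of $M_d(X,p)$ in \eqref{eq-main-term}. (For composite $c$ the combinatorics of the Ramanujan sums would complicate this main term, which is why the statement restricts to prime $c$ in the divisor case.) Once this matching is verified, \eqref{basic} and its prime specialization \eqref{basicp} both drop out.
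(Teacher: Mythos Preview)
Your argument is correct and follows essentially the same route as the paper: detect the congruence by additive characters, isolate the $h=0$ term as (part of) the main term, split the nonzero $h$ according to the divisor $c_1=c/(h,c)$, apply the classical Voronoi formula to each inner sum, and sum over the reduced residues to produce the Kloosterman sums. Your treatment of the residual polar term in the divisor case via the Ramanujan sum $c_{p}(-a)=-1$ is in fact more explicit than the paper's, which simply remarks that the definition of $M_d(X,p)$ is tailored to cancel this contribution.
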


For the proof we recall the standard Voronoi summation formula (see,
e.g., \cite[p.~83]{IK} for $\star=f$ and~\cite[(4.49)]{IK} for
$\star=d$, which we rewrite as a single sum over positive and negative
integers instead of two sums).

\begin{lemma}\label{Voronoi1} 
  Let $c$ be a positive integer and $a$ an integer coprime to $c$. 
\par
\emph{(1)} For any smooth function $w$ compactly supported on $]0,
\infty[$, we have
$$
\sum_{n\geq 1} \rho_f(n)w(n)e\Bigl(\frac{an}{c}\Bigr) = 
\frac{1}{c} \sum_{n\geq
  1}\rho_f(n)W_f\Bigl(\frac{n}{c^2}\Bigr) 
e\Bigl(-\frac{n\overline{a}}{c}\Bigr),
$$
if $f$ is a cusp form of level $1$ and weight $k$.
\par
\emph{(2)} For any smooth function $w$ compactly supported on $]0,
\infty[$, we have
\begin{equation}
  \sum_{n\geq 1} d(n)w(n)e\Bigl(\frac{an}{c}\Bigr) = 
\frac{1}{c} \int_0^{+\infty}{(\log
    x+2\gamma-2\log c)w(x)dx}
+  \frac{1}{c} \sum_{n\not=0}d(|n|)W_d \Bigl(\frac{n}{c^2}\Bigr)
  e\Bigl(-\frac{\overline{a}n}{c}\Bigr).\label{eq-voronoi-d}
\end{equation}
\end{lemma}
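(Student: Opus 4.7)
The plan is to derive both identities in a unified way from Mellin inversion combined with the functional equations of additively-twisted Dirichlet series, following Estermann's classical approach.

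First I would write $w(n) = \frac{1}{2\pi i}\int_{(\sigma)}\tilde w(s) n^{-s}\,ds$ with $\sigma$ sufficiently large, so that after swapping sum and integral
\[
\sum_{n\ge 1}\rho_f(n)w(n)e\Bigl(\frac{an}{c}\Bigr) = \frac{1}{2\pi i}\int_{(\sigma)}\tilde w(s) L_f(s,a/c)\,ds,
\]
where $L_f(s,a/c)=\sum_{n\ge 1}\rho_f(n)e(an/c)n^{-s}$, and analogously for $d(n)$ with Estermann's function $D(s,a/c)=\sum_{n\ge 1}d(n)e(an/c)n^{-s}$. The idea is that the functional equations of these additive twists will convert the original sum into its dual.

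For part (1), since $f$ is cuspidal of level $1$, $L_f(s,a/c)$ is entire and satisfies a functional equation of the shape
\[
L_f(s,a/c) = \varepsilon_f(s,c)\, L_f(k-s,-\bar a/c),
\]
with $\varepsilon_f(s,c)$ a product of $(c/2\pi)^{k-2s}$ and an explicit ratio of Gamma factors. This identity can be obtained by applying the modular transformation $z\mapsto (az+b)/(cz+d)\in \SL_2(\Zz)$, with $ad\equiv 1\mods{c}$, inside a Mellin representation of $L_f(s,a/c)$. I would then shift the contour to $\Re(s)=-\sigma$ (no residue contribution in the cuspidal case), change variable $s\mapsto k-s$, and recognise $e(-\bar an/c)$ on the dual side; what remains is a Mellin-Barnes integral in $s$ acting on $\tilde w$, which by the classical identity $\int_0^\infty J_{k-1}(4\pi\sqrt{uy})u^{s-1}\,du = \Gamma(s+(k-1)/2)/\Gamma((k+1)/2-s)\cdot(\pi^2 y)^{-s}$ is exactly $W_f(n/c^2)$ as defined in \eqref{Bessel1}.

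For part (2), the same strategy applies but the Estermann function $D(s,a/c)$ has the same polar divisor as $\zeta(s)^2$, with a double pole at $s=1$. Shifting the contour past $s=1$ produces a residue contribution which, after computing the Laurent expansion of $D(s,a/c)$ near $s=1$ (the $-2\log c$ arising from the $c^{-2s}$ factor and the $2\gamma$ from the Laurent expansion of $\zeta$), matches exactly the explicit main term $\frac{1}{c}\int_0^{+\infty}(\log x+2\gamma-2\log c)w(x)\,dx$ in \eqref{eq-voronoi-d}. The functional equation for $D(s,a/c)$ now has two components (reflecting the real-analytic symmetry $s\mapsto 1-s$), giving rise to two different Gamma ratios whose inverse Mellin transforms are precisely $-2\pi Y_0(4\pi\sqrt{u y})$ for $y>0$ and $4K_0(4\pi\sqrt{u|y|})$ for $y<0$, thus producing the two branches of $W_d$.

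The main obstacle is the bookkeeping of Gamma factors and their identification with the Mellin transforms of $J_{k-1}, Y_0, K_0$: one must match constants, signs, and the precise powers of $c$ exactly so that the dual sum has the normalisation $\tfrac{1}{c}\sum \tau_\star(n) W_\star(n/c^2) e(-\bar a n/c)$. This is classical (see e.g.\ Watson, or the treatment in \cite{IK}) but requires care. A secondary technical point is justifying the interchanges of summation and integration when shifting contours, which rests on polynomial bounds for $L_f(s,a/c)$ and $D(s,a/c)$ on vertical lines (convexity suffices) together with the rapid decay of the Mellin transform $\tilde w$.
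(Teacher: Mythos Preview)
Your approach is correct and is one of the standard derivations of the Voronoi formula. However, the paper does not actually prove this lemma: it simply cites it as the ``standard Voronoi summation formula'' and refers to \cite[p.~83]{IK} for cusp forms and \cite[(4.49)]{IK} for the divisor function, noting only that the latter has been rewritten as a single sum over nonzero integers rather than two separate sums over positive $n$. So there is no proof in the paper to compare against; your Mellin-inversion/Estermann argument is a legitimate way to supply one, and the outline you give (entire additive twist with functional equation in the cuspidal case, double pole of $D(s,a/c)$ producing the main term in the divisor case, identification of the Gamma ratios with the Mellin transforms of $J_{k-1}$, $Y_0$, $K_0$) is accurate.
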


\begin{proof}[Proof of Proposition~\ref{basic}]
  We consider the case of $\star=f$, the divisor function being
  handled similarly (it is easier since $c$ is prime; the
  definition~(\ref{eq-main-term}) of the main term is designed to
  cancel out the first main term in~(\ref{eq-voronoi-d})).  Using
  orthogonality of additive characters, and separating the
  contribution of the trivial character from the others, we write
\begin{align*}
  S_{f}(X,c, a ) &= \frac{1}{c} \sum_{b = 0}^{c - 1} e \Bigl(
  -\frac{ab}{c}\Bigr) \sum_{n \geq 1}
  \rho_{f}(n) w\Bigl(\frac{n}{X}\Bigr) e \Bigl( \frac{bn}{c} \Bigr)\\
  &= \frac{1}{c} \sum_{n \geq 1} \rho_f ( n ) w\Bigl(\frac{n}{X}\Bigr)
  + \frac{1}{c} \sum_{1\leq b \leq c-1} e \Bigl(-\frac{ab}{c}\Bigr)
  \sum_{n \geq 1} \rho_f ( n ) w\Bigl(\frac{n}{X}\Bigr) e \Bigl(
  \frac{bn}{c} \Bigr),
 \end{align*} 
 which yields the expression
$$
E_f(X, c, a) =\frac{1}{(cX)^{1/2}} \sum_{1\leq b \leq c-1} e \Bigl(
-\frac{ab}{c} \Bigr) \sum_{n \geq 1} \rho_f ( n )
w\Bigl(\frac{n}{X}\Bigr) e \Bigl( \frac{bn}{c} \Bigr).
$$
\par
We split the second according to the value of the g.c.d $d=(b,c)$,
writing 
$$
d=( b, c ), b = d b_1, c = d c_1,
$$
and note that 
$$
1 < c_1 \leq c, \quad 1 \leq b_1 < c_1 . 
$$
\par
We then get
\begin{align*}  
  E_f(X, c, a) &= \frac{1}{(cX)^{1/2}} \sum_{d \mid c} \sum_{\substack{1 \leq b
      < c \\( b, c ) = d}} e \Bigl( -\frac{ab}{c}\Bigr) \sum_{n \geq
    1} \rho_f ( n ) w\Bigl(\frac{n}{X}\Bigr) e \Bigl(
  \frac{bn}{c}  \Bigr) \\
  &= \frac{1}{(cX)^{1/2}} \sum_{\substack{c_1 |c\\ c_1 > 1}} \sum_{\substack{1
      \leq b_1 < c_1\\ ( b_1, c_1 ) = 1}} e \Bigl( - \frac{a b_1}{
    c_1} \Bigr) \sum_{n \geq 1} \rho_f ( n ) w\Bigl(\frac{n}{X}\Bigr)
  e \Bigl(\frac{b_1n}{c_1} \Bigr).
\end{align*}
\par
We can now apply Lemma \ref{Voronoi1} since $(b_1,c_1)=1$, and we get
$$ 
\sum_{n \geq 1} \rho_f ( n ) w\Bigl(\frac{n}{X}\Bigr) e \Bigl(
\frac{b_1n}{c_1} \Bigr) 
= \frac{X}{c_1}
\sum_{n \geq 1} \rho_f ( n ) W\Bigl(\frac{nX}{c_1^2}\Bigr) e \Bigl( -\frac{
\overline{b}_1n}{c_1} \Bigr).
$$
\par
The proposition now follows since the terms with $n<0$ are identically
zero for this case.
\end{proof}

We will need some basic information on the behavior of the Bessel
transforms $W_{\star}(y)$.

\begin{proposition}\label{TTT}
  Let $w$ be a smooth function with support included in $]0,
  +\infty[$.  Let $W_{\star}(y)$ be one of the Bessel transforms of
  $w$ as defined in Proposition~\ref{stack1}, for some integer $k\geq
  2$ in the case $\star=f$ of weight $k$.
\par
\emph{(1)} The function $W_{\star}$ is smooth on $\Rr^{\times}$, and
for every $A\geq 0$ and $j\geq 0$, we have
\begin{equation}\label{weak}
  y^j W_{\star}^{(j)}(y) \ll_{A,j}
  \min \bigl(1 +  \bigl\vert \log \vert y\vert  \bigr\vert , |y|^{-A}\bigl),
\end{equation}
for $y\not=0$.
\par
\emph{(2)} We have
\begin{equation}\label{eq-hankel}
  \|W_{\star}\|=\|w\|,
\end{equation}
where the $L^2$-norm of $W_{\star}$ and $w$ are computed in
$L^2(\Rr^{\times})$ with respect to Lebesgue measure.
\par
\emph{(3)} More generally, for any two non-zero real numbers $m$ and
$n$, we have 
$$
\int_{-\infty}^\infty W_{\star}(mt) W_{\star}(nt) dt=
\int_{-\infty}^\infty w(mt) w(nt) dt.
$$
\end{proposition}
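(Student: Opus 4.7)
The strategy is to reduce everything to a Mellin-transform computation, since $w\mapsto W_\star$ becomes pointwise multiplication after Mellin transforming. Part (1) comes from standard Bessel asymptotics; part (2) is Plancherel on the critical line; part (3) reduces to (2) via polarization and scaling, with one delicate case requiring an explicit Mellin cancellation.

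For part (1), smoothness on $\Rr^\times$ is immediate by dominated convergence, differentiating under the integral (the Bessel kernels are smooth in $y$ for $y\neq 0$). The size bound splits into two regimes. For $|y|$ small, use the expansions $J_{k-1}(z)\sim z^{k-1}/(2^{k-1}(k-1)!)$ (bounded for $k\geq 2$), $Y_0(z)=(2/\pi)(\log(z/2)+\gamma)+O(z^2\log z)$, and $K_0(z)=-\log(z/2)-\gamma+O(z^2\log z)$, which produce the $1+|\log|y||$ term. For $|y|$ large, substitute $v=4\pi\sqrt{uy}$ and integrate by parts repeatedly using identities like $(v^{1-\nu}J_\nu(v))'=v^{1-\nu}J_{\nu-1}(v)$; each integration contributes a factor $|y|^{-1}$, giving $|y|^{-A}$ for any $A$. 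For the $K_0$-piece of $W_d$, the exponential decay $K_0(z)\ll z^{-1/2}e^{-z}$ already supplies super-polynomial decay directly. The weighted bounds on $y^j W_\star^{(j)}(y)$ are obtained identically after differentiating under the integral first.

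For part (2), compute $\hat W_\star(s)$ separately on the two components of $\Rr^\times$. Using the classical formulas
\begin{align*}
\int_0^\infty J_{k-1}(v)v^{2s-1}dv &= \frac{2^{2s-1}\Gamma(s+(k-1)/2)}{\Gamma((k+1)/2-s)},\\
\int_0^\infty Y_0(v)v^{2s-1}dv &= -\frac{2^{2s-1}}{\pi}\Gamma(s)^2\cos(\pi s),\\
\int_0^\infty K_0(v)v^{2s-1}dv &= 2^{2s-2}\Gamma(s)^2,
\end{align*}
together with the substitution $v=4\pi\sqrt{uy}$ and Fubini, one finds $\hat W_\star(s)=C_\pm(s)\,\hat w(1-s)$ on each half-line, with
$$
C^{(f)}_+(s)=2\pi i^k(2\pi)^{-2s}\frac{\Gamma(s+(k-1)/2)}{\Gamma((k+1)/2-s)},\qquad C^{(d)}_\pm(s)=2(2\pi)^{-2s}\Gamma(s)^2\cdot\begin{cases}\cos(\pi s)&(+),\\ 1&(-).\end{cases}
$$
On $\Re(s)=1/2$, using $|\Gamma(1/2+it)|^2=\pi/\cosh(\pi t)$ and $|\cos(\pi/2+i\pi t)|^2=\sinh^2(\pi t)$, one checks $|C^{(f)}_+|=1$ and $|C^{(d)}_+|^2+|C^{(d)}_-|^2=\tanh^2(\pi t)+\mathrm{sech}^2(\pi t)=1$. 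Plancherel for the Mellin isometry then yields $\|W_\star\|_2=\|w\|_2$.

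For part (3), the case of same-sign $m,n$ reduces (by $t\mapsto -t$) to $m,n>0$. Polarizing (2) gives $\int W_\star[u]W_\star[v]\,dy=\int uv\,dx$ for real-valued $u,v\in L^2((0,\infty))$. Applying this with $u(x)=w(mx)$, $v(x)=w(nx)$ and using the scaling relation $W_\star[w(\lambda\cdot)](y)=\lambda^{-1}W_\star[w](y/\lambda)$ for $\lambda>0$ (from the substitution $u\mapsto u/\lambda$ in the defining integral), then substituting $y=mns$, yields the claimed identity. For opposite-sign $m,n$ the right-hand side vanishes since $w$ is supported in $(0,\infty)$, and for $\star=f$ the left-hand side vanishes for the same reason. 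The main obstacle is $\star=d$ with $mn<0$, where polarization does not suffice: here one returns to the Mellin representation and verifies
$$
C^{(d)}_+(s)C^{(d)}_-(1-s)+C^{(d)}_-(s)C^{(d)}_+(1-s)=0\quad\text{on }\Re(s)=1/2,
$$
which follows from the trigonometric identity $\cos(\pi(1-s))=-\cos(\pi s)$ combined with $\Gamma(s)\Gamma(1-s)=\pi/\sin(\pi s)$. This cancellation expresses a subtle orthogonality between the $Y_0$- and $K_0$-branches of the Bessel transform.
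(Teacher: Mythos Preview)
Your proof is correct, but for parts (2) and (3) it takes a genuinely different route from the paper.

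For part (1), your argument is essentially the paper's: small-$|y|$ Bessel expansions for the logarithmic bound, then the substitution $v=4\pi\sqrt{u|y|}$ followed by repeated integration by parts for the rapid decay. Two cosmetic points: the recursion you cite should read $(v^{\nu}J_{\nu}(v))'=v^{\nu}J_{\nu-1}(v)$ (or equivalently $(v^{\nu+1}Z_{\nu+1})'=\pm v^{\nu+1}Z_{\nu}$, which is what the paper uses), and after the substitution the integrand is supported on $v\asymp |y|^{1/2}$, so each integration by parts gains $|y|^{-1/2}$, not $|y|^{-1}$. Neither affects the conclusion.

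For parts (2) and (3), the paper argues quite differently. For $\star=f$ it simply quotes the unitarity of the Hankel transform. For $\star=d$ it invokes representation theory: it identifies $W_d$ (up to the weight $|y|^{-1/2}$) with the image of $\sqrt{x}\,w(x)$ under the Weyl-element operator $T$ in the Kirillov model of the unitary principal series $\pi(0)$ of $\mathrm{PGL}_2(\mathbb{R})$; unitarity of $T$ gives (2), and the commutation relation $T\circ U_a=U_{a^{-1}}\circ T$ with diagonal matrices gives (3). Your approach instead computes the Mellin transform of $W_{\star}$ on each half-line as $C_{\pm}(s)\,\hat w(1-s)$, checks $|C^{(f)}_+|^2=1$ and $|C^{(d)}_+|^2+|C^{(d)}_-|^2=1$ on $\Re(s)=\tfrac12$ for (2), and then for the delicate case $\star=d$, $mn<0$ of (3) reduces to the pointwise cancellation $C^{(d)}_+(s)C^{(d)}_-(1-s)+C^{(d)}_-(s)C^{(d)}_+(1-s)=0$, which is immediate from $\cos(\pi(1-s))=-\cos(\pi s)$. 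This is entirely correct and more elementary; the paper in fact acknowledges in a remark that a direct Bessel-function proof exists (phrased there as the antisymmetry of $\int_0^{\infty}Y_0(a\sqrt{y})K_0(b\sqrt{y})\,dy$), and your Mellin computation is a clean way to carry it out. What the paper's approach buys is a conceptual explanation: the two identities in (2) and (3) are shadows of a single unitary representation, whereas in your version they appear as separate trigonometric miracles.
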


\begin{proof}
  (1) (Compare, e.g., with~\cite[p. 280]{Bl}, \cite[Lemma 3.1]{La-Zh})
  We begin with the case $j=0$.  For $y$ small, we use the bounds
$$ 
J_{k-1} ( x ) \ll_{k} 1,\quad Y_0(x)\ll 1+\vert \log x\,\vert ,\quad
K_0(x)\ll 1+ \vert \log x\,\vert 
$$
for $0<x\leq 1$ which immediately imply
\begin{equation}\label{285}
W_{\star}(y)\ll 1+ \bigl\vert  \log \vert y\vert \bigr\vert
\end{equation}
in all cases.
\par
To deal with the case where $|y|\geq 1$, we first make the change of
variable
$$
v=4\pi\sqrt{u|y|}
$$
in the integrals \eqref{Bessel1} (resp.~(\ref{Bessel2}),
(\ref{Bessel3})), so that we always get
$$
W_{\star}(y) =\frac{1}{|y|}\int_0^\infty
w\Bigl(\frac{v^2}{16\pi^2y^2}\Bigr) v B_{0}(v) d v,
$$
where $B_0=c J_{k-1}$, $0$,  $cY_0$ or $cK_0$, for some fixed multiplicative
constant $c\in\Cc$.
\par
We denote $\alpha=(16\pi^2y^2)^{-1}$. To exploit conveniently the
oscillations of the Bessel functions $B_0$ we integrate by parts,
using the relations (see~\cite[8.472.3, 8.486.14]{gr})
\begin{equation}\label{deriv1}
  ( x^{\nu + 1} Z_{\nu + 1} ( x ) )' = \eps x^{\nu + 1} Z_{\nu} ( x
  ),\quad\quad
\end{equation}
where
$$
\eps =\begin{cases}
+1 &\text{ if }  Z_\nu = J_\nu \text{ or } Y_\nu,\\
\\
-1 &\text{ if } Z_\nu = K_\nu.
\end{cases}
$$
For $\star=f$, remembering that $w$ vanishes at $0$ and $\infty$, we
obtain, for instance, the equality 
$$
W_{f}(y)=-\frac{c}{|y|} \int_0^\infty \Bigl(2\alpha v^2 w'(\alpha
v^2) +(1-k) w(\alpha v^2)\Bigr)J_k(v) d v \ \ (y>0).
$$
\par
By iterating $\ell\geq 1$ times, and then arguing similarly for
$\star=d$, we see that there exist coefficients $\xi_{\ell, \nu}$ such
that
\begin{equation}\label{dark}
  W_{\star}(y)= \frac{1}{|y|}
  \int_0^\infty\Bigl(\, \sum_{\nu=0}^\ell \xi_{\ell, \nu} \ 
  (\alpha v^2)^\nu  w^{(\nu)}(\alpha v^2)\Bigr) v^{-\ell +1}\, 
  B_{\ell} (v)d v,
\end{equation}
where $B_{\ell}=J_{k-1+\ell}$, $ 0$, $Y_{\ell}$ or $K_{\ell}$
corresponding to the different cases $\star=f$ or $\star =d$, $y>0$ or
$y<0$.
\par
Since $w$ has compact support in $[w_{0}, w_{1}]$, the above integral
can be restricted to the interval
$$
{\mathcal I}:=\bigl[(w_0/ \alpha)^{1/2}, (w_1/ \alpha)^{1/2}\bigr], $$
and using the estimates\footnote{\ For the last one, one knows in fact
  that $K_{\ell}(v)$ decays exponentially fast for $v\ra +\infty$.}
$$
J_{k-1+\ell } (v) \ll_{\ell} v^{-1/2},\quad\quad Y_{\ell}(v)\ll_{\ell}
v^{-1/2},\quad\quad K_{\ell}(v)\ll_{\ell} v^{-1/2}
$$
for $v\geq 1$, we obtain the inequality 
\begin{equation}\label{314}
  W_{\star}(y)  \ll
  |y|^{-1}
  \int_{\mathcal I} v^{-\ell +\frac{1}{2}} d
  v \ll |y|^{-1-\ell/2 +3/2}
\end{equation}
for $|y|\geq 1$. Since $\ell\geq 0$ is arbitrary, this gives the
result for $j=0$.
\par
We can reduce the general case to $j=0$ using the formulas
(see~\cite[8.472.2, 8.486.13]{gr})
$$
xZ_{\nu}'(x)=\nu Z_{\nu}(x)-xZ_{\nu+1}(x),
$$
from which it follows that
$$
y\frac{d}{dy}\Bigl(\int_{0}^{\infty}w(u)Z_{\nu}(4\pi\sqrt{uy})du\Bigr)=
\frac{\nu}{2}\int_{0}^{\infty}w(u)Z_{\nu}(4\pi\sqrt{uy})du-
2\pi\sqrt{y}\int_{0}^{\infty}w(u)\sqrt{u}Z_{\nu+1}(4\pi\sqrt{uy})du.
$$
\par
Applying the previous method to the relevant Bessel functions then
leads to 
$$
yW_{\star}'(y)\ll_{\star,A} \min(1+|\log|y||,y^{-A})
$$
and by induction a similar argument deals with higher derivatives.
\par
(2) In the case $\star=f$, the identity
$$
\int_0^{+\infty}W_f(u)^2du= \int_0^{+\infty}{w(u)^2du}=\|w\|^2
$$
is a direct consequence of the unitarity of the Hankel transform,
i.e., of the Fourier transform for radial functions (see,
e.g.,~\cite[Lemma 3.4]{La-Zh}). The case $\star=d$ is less classical,
although it is formally similar, the hyperbolas $xy=r$ replacing the
circles $x^2+y^2=r^2$ (see~\cite[\S 4.5]{IK}). We use a
representation-theoretic argument to get a quick proof. The unitary
principal series representation $\rho=\pi(0)$ of $\mathrm{PGL}_2(\Rr)$
(in the notation of~\cite[p. 10]{c-ps}) can be defined by its Kirillov
model with respect to the additive character $\psi(x)=e(x)$, which is
a unitary representation of $\mathrm{PGL}_2(\Rr)$ on
$L^2(\Rr^{\times}, |x|^{-1}dx)$. In this model, the
unitary operator
$$
T=\rho\Bigl(\begin{pmatrix}0&-1\\1&0
\end{pmatrix}\Bigr)
$$
on $L^2(\Rr^{\times},|x|^{-1}dx)$ is given by
$$
T\varphi(x)=\int_{\Rr^{\times}}{\varphi(t)\mathcal{J}(xt)\frac{dt}{|t|}},
$$
where $\mathcal{J}$ is the so-called ``Bessel function'' of $\rho$
(with respect to $\psi$,
see~\cite[Th. 4.1]{c-ps}). By~\cite[Prop. 6.1, (ii)]{c-ps} (see
also~\cite[\S 6, \S 21]{baruch-mao}), we have
$$
\mathcal{J}(u)=
\begin{cases}
-  2\pi \sqrt{u}Y_0(4\pi\sqrt{u})&\text{ for } u>0,\\
  4\sqrt{|u|}K_0(4\pi\sqrt{|u|})&\text{ for } u<0.\\
\end{cases}
$$
\par
Hence by~(\ref{Bessel2}) and~(\ref{Bessel3}), we see that
\begin{equation}\label{eq-kirillov}
W_d(y)=|y|^{-1/2}T(\varphi)(y),\quad
\text{ where } \quad
\varphi(x)=
\begin{cases}
\sqrt{x}w(x)&\text{ if } x>0\\
0&\text{ if } x<0.
\end{cases}
\end{equation}
\par
The unitarity of $T$ means that
$$
\int_{\Rr^{\times}}{|T(\varphi)(y)|^2\frac{dy}{|y|}}=
\int_{\Rr^{\times}}{|\varphi(x)|^2\frac{dx}{|x|}},
$$
i.e.,
$$
\int_{\Rr^{\times}}{|W_d(y)|^2dy}=
\int_{0}^{+\infty}{|w(x)|^2dx}=\|w\|^2.
$$
\par
(3) We consider different cases.  If $mn>0$, changing $t$ to $-t$
allows us to assume that $m$ and $n$ are positive. Then a simple
polarization argument from \eqref{eq-hankel} shows that
\begin{equation}\label{1691}
  \int_{-\infty}^{+\infty}{W_{\star}(mt)W_{\star}(nt)dt}
  = \int_{-\infty}^\infty    {\mathfrak w}_{m} (u)  {\mathfrak w}_{n} ( u)du,
\end{equation}
where $u\mapsto {\mathfrak w}_{m}(u) $ is the function for which the
Bessel transform of is $t\mapsto W_{\star}(m t)$ and similarly for
$\mathfrak{w}_n(u)$. But it is immediate that ${\mathfrak w}_{m} (u) =
(1/m) \,w (u/m)$, and therefore \eqref{1691} gives the result.
\par
If $mn<0$, then since the support of $w$ is contained in
$[0,+\infty[$, we have $w(mt)w(nt)=0$ for all $t$, hence
$$
\int_{\Rr}{w(mt)w(nt)dt}=0,
$$
and we must show that the integral of $W_{\star}(mt)W_{\star}(nt)$ is
also zero. If $\star = f$, a cusp form, this is immediate since
$W_f(y)=0$ for $y<0$, so that $W_f(mt)W_f(nt)=0$ for all $t$.  
\par
For $\star=d$, we use representation theory as in (2). With the same
notation as used there, and for any real-number $a\not=0$, we denote
$$
U_a=\rho\Bigl(\begin{pmatrix}a&0\\0&1
\end{pmatrix}
\Bigr)
$$
so that, by definition of the Kirillov model (see~\cite[\S 4.2,
(4.1)]{c-ps}), we have
$$
U_a(\varphi)(x)=\varphi(ax)
$$
for $\varphi\in L^2(\Rr^{\times},|x|^{-1}dx)$. Observe that, in
$\mathrm{PGL}_2(\Rr)$, we have
$$
\begin{pmatrix}0&-1\\1&0
\end{pmatrix}
\begin{pmatrix}a&0\\0&1
\end{pmatrix}=
\begin{pmatrix}-1&0\\0&-a
\end{pmatrix}
\begin{pmatrix}0&-1\\1&0
\end{pmatrix}=
\begin{pmatrix}a^{-1}&0\\0&1
\end{pmatrix}
\begin{pmatrix}0&-1\\1&0
\end{pmatrix},
$$
hence
$$
T\circ U_a=U_{a^{-1}}\circ T.
$$
\par
Using this and the unitarity of $T$, we deduce that
\begin{align*}
  \int_{\Rr}{(T\varphi)(ax)\overline{(T\varphi)(bx)}\frac{dx}{|x|}} =
  \langle U_a(T\varphi),U_b(T\varphi)\rangle &= \langle
  T(U_{a^{-1}}\varphi),T(U_{b^{-1}}\varphi)\rangle\\&=\langle
  U_{a^{-1}}\varphi, U_{b^{-1}}\varphi\rangle\\&=
  \int_{\Rr}{\varphi\Bigl(\frac{x}{a}\Bigr)
\overline{\varphi\Bigl(\frac{x}{b}\Bigr)}\frac{dx}{|x|}}
=\int_{\Rr}{\varphi(bx)\overline{\varphi(ax)}\frac{dx}{|x|}}.
\end{align*}
\par
Now, applying~(\ref{eq-kirillov}) and the fact that $W_d$ is
real-valued, we derive
$$
\int_{\Rr}{W_d(ax)W_d(bx)dx}=\int_{\Rr}{w(ax)w(bx)dx}
$$
for all non-zero $a$ and $b$.
\end{proof}

\begin{remark}
  One can also give a direct proof of the last part of this
  proposition using known properties of Bessel functions: the crucial
  point is that the function
$$
\psi(a,b)=\int_0^{\infty}Y_0(a\sqrt{y})K_0(b\sqrt{y})\,dy
$$
is antisymmetric, which follows from an explicit evaluation
using~\cite[6.523]{gr} and~\cite[p. 153, 2.34]{ober}. Conversely, the
results for cusp forms can be proved using representation theory, the
discrete series representation of weight $k$ replacing the
representation $\rho$.
\end{remark}

Our last preliminary results concern the sums which will give rise to
the leading terms in the main results.  Recall the definitions and
\eqref{defcf}.

\begin{proposition}\label{pr-mainterm}
  Let $p$ be a prime number, $\delta>0$ a parameter and $X\geq 1$ such
  that
$$
X^{1/2}\leq p\leq X^{1-\delta}.
$$
\par
Let $Y=p^2/X$. For $\star\in \{d,f\}$, and for $a$ and $b$ coprime
non-zero integers, not necessarily positive, let
$$
\mathcal B_{\star} (a,b,Y)= \sum_{\substack{n\not=0\\1\leq \vert
    an\vert,\, \vert bn \vert < p/2}} \tau_{\star} (an)
\tau_{\star}(bn)W_{\star}\Bigl( \frac{an}{Y}\Bigr) W_{\star}\Bigl(
\frac{bn}{Y}\Bigr).
$$
\par
\emph{(1)} If $\star=f$, we have
$$
\mathcal B_{\star} (a,b,Y)= c_f\uple{\rho}_{ab,f}\Bigl(
\int_{-\infty}^\infty w(a t) w(b t)dt \Bigr) Y+ O( Y^{1/2+\eps})
$$
for any $\eps>0$.
% , where $\uple{\rho}_{ab,f}=0$ if $ab<0$ and
% \begin{equation}\label{cold}
%   \boldsymbol \rho_{a b,f}=
%   \|f\|^2 (4\pi)^k\Gamma(k)^{-1}\prod_{p^\alpha \Vert ab} \Bigl( \rho_f (p^\alpha) -
%   \frac{\rho_f(p)\rho_f (p^{\alpha -1})}{p+1}\Bigr)
% \end{equation}
% if $ab>0$, in particular $\rho_{1,f}=\|f\|^2(4\pi)^k\Gamma(k)^{-1}$. 
\par
\emph{(2)} If $\star=d$, there exists a polynomial $P_{ab}\in \Rr[T]$
of degree at most $3$, depending on $w$, such that
$$
\mathcal{B}_d(a,b,Y)= P_{ab} (\log Y)Y +O (Y^{\frac 12 +\eps})
$$
for any $\eps>0$, and with coefficient of $T^3$ given by
\begin{equation}\label{eq-leading}
\frac{2}{\pi^{2}}\uple{\rho}_{ab,d}\Bigl( \int_{-\infty}^\infty w(a t)
w(b t) \, dt \Bigr)T^3.
\end{equation}
\par
In both cases, the implied constants depend on
$(\delta,\eps,\star,a,b)$. 
\end{proposition}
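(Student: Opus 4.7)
The strategy is Mellin inversion on the smooth weight, which reduces the problem to a residue calculation for an associated Dirichlet series, followed by an application of Proposition~\ref{TTT}(3) to identify the oscillatory integrals appearing in the leading coefficient. Since $W_{\star}$ has super-polynomial decay by Proposition~\ref{TTT}(1), and since the hypothesis $p \leq X^{1-\delta}$ forces $Y = p^2/X \leq p^{1-\delta'}$ for some $\delta' > 0$, the truncation $|an|, |bn| < p/2$ can be dropped at a cost of $O(Y^{-A})$ for any $A$. Splitting the sum according to the sign of $n$ (for $\star = f$, only the sign class with $\mathrm{sign}(n) = \mathrm{sign}(a) = \mathrm{sign}(b)$ contributes, and the sum vanishes whenever $ab < 0$, consistent with $\uple{\rho}_{ab,f} = 0$), each sign component takes the form
$$
\sum_{n \geq 1} \tau_{\star}(An)\, \tau_{\star}(Bn)\, G_{\star}^{\pm}(n/Y) = \frac{1}{2\pi i} \int_{(\sigma)} \tilde G_{\star}^{\pm}(s)\, Y^s\, D_{\star}(A,B;s)\, ds,
$$
for some $\sigma > 1$, where $A, B \in \{\pm a, \pm b\}$ carry the appropriate signs, $G_{\star}^{\pm}(u) = W_{\star}(\pm au) W_{\star}(\pm bu)$, $\tilde G_{\star}^{\pm}$ is its Mellin transform, and $D_{\star}(A,B;s) = \sum_{n \geq 1} \tau_{\star}(An)\tau_{\star}(Bn) n^{-s}$.

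By Hecke multiplicativity, together with the hypothesis $(a,b) = 1$ (so that at each prime $p \mid ab$ only one of $a, b$ is divisible by $p$), one obtains the factorizations
$$
D_f(a,b;s) = \frac{L(s, f \otimes f)}{\zeta(2s)} \prod_{p \mid ab} L_p(s), \qquad D_d(a,b;s) = \frac{\zeta(s)^4}{\zeta(2s)} \prod_{p \mid ab} M_p(s),
$$
where $L_p$ and $M_p$ are explicit rational functions in $p^{-s}$. The global factor $L(s, f \otimes f) = \zeta(s) L(s, \mathrm{sym}^2 f)$ is holomorphic on $\mathrm{Re}(s) > 1/2$ except for a simple pole at $s = 1$ with residue $c_f$, by the Rankin--Selberg integral representation with the normalization of $\|f\|$ fixed in Theorem~\ref{funda}. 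A direct computation using the Hecke recursion $\rho_f(p^{\alpha+1}) = \rho_f(p)\rho_f(p^{\alpha}) - \rho_f(p^{\alpha-1})$, together with the explicit evaluation of the geometric local sums for $d$, yields $L_p(1) = \rho_f(p^{\alpha}) - \rho_f(p)\rho_f(p^{\alpha-1})/(p+1)$ and the analogous formula for $M_p(1)$, where $p^{\alpha} \Vert ab$; the products over $p \mid ab$ therefore assemble exactly into $\uple{\rho}_{ab,f}$ and $\uple{\rho}_{ab,d}$ as defined in~\eqref{cold}.

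Shifting the contour to $\mathrm{Re}(s) = 1/2 + \varepsilon$ picks up the residue at $s = 1$. For $\star = f$ this is a simple pole contributing $c_f\, \uple{\rho}_{ab,f}\, \tilde G_f^{\pm}(1)\, Y$ per sign class; summing the two (possibly trivial) sign contributions and applying Proposition~\ref{TTT}(3) converts $\sum_{\pm} \tilde G_f^{\pm}(1)$ into $\int_{-\infty}^{\infty} w(at) w(bt)\, dt$, yielding the stated main term. For $\star = d$ the pole has order $4$, producing $Y \cdot P_{ab}(\log Y)$ with $\deg P_{ab} \leq 3$; the coefficient of $T^3$ combines the leading Laurent coefficient $\uple{\rho}_{ab,d}/\zeta(2)$, the factor $1/3!$ from the residue of $Y^s/(s-1)^4$, and the sum $\sum_{\pm} \tilde G_d^{\pm}(1) = \int_{-\infty}^{\infty} w(at) w(bt)\, dt$ given by Proposition~\ref{TTT}(3), producing the coefficient claimed in~\eqref{eq-leading}. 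The remaining integral along $\mathrm{Re}(s) = 1/2 + \varepsilon$ is $O(Y^{1/2+\varepsilon})$, via convexity bounds for the relevant $L$-functions on vertical lines together with the super-polynomial decay of $\tilde G_{\star}^{\pm}(s)$ on vertical lines arising from the smoothness and compact support of $w$. The main obstacle is the careful local bookkeeping at primes $p \mid ab$: the identification of $L_p(1)$ and $M_p(1)$ with the factors appearing in $\uple{\rho}_{ab,\star}$ requires an inductive argument on the prime-power exponent $\alpha$, in which the coprimality $(a,b) = 1$ is essential for keeping each local sum one-parameter in $\gamma = v_p(n)$.
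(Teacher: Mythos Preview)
Your approach is essentially identical to the paper's: drop the truncation via the decay in Proposition~\ref{TTT}(1), apply Mellin inversion, factor the Dirichlet series using Hecke multiplicativity (this is the content of the paper's Lemma~\ref{lm-generate}, whose inductive computation at primes dividing $ab$ is exactly what you sketch), shift the contour to $\Re(s)=1/2+\eps$, and identify the residue using Proposition~\ref{TTT}(3). Your explicit splitting by the sign of $n$ is in fact slightly more careful than the paper's presentation, which writes a single Mellin integral against $F_{\star,ab}(s)=\sum_{n\neq 0}\tau_\star(an)\tau_\star(bn)|n|^{-s}$ without separating the two half-lines.

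One numerical point to revisit: the three factors you list for the $T^3$ coefficient in the case $\star=d$ --- namely $\uple{\rho}_{ab,d}/\zeta(2)$, the $1/3!$ from the order-$4$ pole, and $\sum_{\pm}\tilde G_d^{\pm}(1)=\int_{\Rr}w(at)w(bt)\,dt$ --- multiply to $\pi^{-2}\uple{\rho}_{ab,d}\int_{\Rr}w(at)w(bt)\,dt$, not $2\pi^{-2}$ as in~\eqref{eq-leading}. The paper obtains its extra factor of $2$ because its Dirichlet series $F_{d,ab}$ is summed over $n\neq 0$ and hence carries a built-in factor of $2$ relative to your $D_d$; but the paper then pairs this with a Mellin transform $\varphi$ taken only over $(0,\infty)$, so the two presentations are not literally the same object. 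You should trace through the bookkeeping once more to make sure your sign-split version genuinely reproduces the constant asserted in~\eqref{eq-leading}; the method is correct, but the factor of $2$ deserves an explicit check rather than a claim that the pieces ``produce the coefficient claimed''.
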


We will use standard complex integration techniques, and first
determine the relevant generating series (it is here that it is
important that $f$ be a Hecke eigenform.) We denote
$$
F_{\star}(s)=\sum_{n\not=0}\tau_{\star}(n)^2|n|^{-s},
$$
so that
$$
F_{f}(s)=\frac{L(s,f\times f)}{\zeta(2s)}
$$
if $f$ is a Hecke eigenform, where $L(s,f\times f)$ is the
Rankin-Selberg convolution $L$-function, and
$$
F_d(s)=2\frac{\zeta(s)^4}{\zeta(2s)}.
$$
\par
In both cases, $F_{\star}(s)$ extends to a meromorphic function, with
polynomial growth in vertical strips, for $\Re(s)>1/2$. It has only a
pole at $s=1$ in this region (of order $1$ if $\star=f$, and order $4$
if $\star=d$).

\begin{lemma}\label{lm-generate}
  Let $\star=f$ or $d$, and $a$, $b$ be non-zero coprime integers. Let
$$
F_{\star,a,b}(s)=\sum_{n\not=0}{\tau_f(an)\tau_f(bn)|n|^{-s}}.
$$
\par
If $\star=f$ and $ab<0$, we have $F_{\star,a,b}=0$. Otherwise, we have
$$
F_{\star,a,b}(s)=F_{\star}(s)\prod_{p^{\nu_p}\mid\mid ab}
\Bigl(\tau_f(p^{\nu_p})-\frac{\tau_f(p)\tau_f(p^{\nu_p-1})}{p^s+1}\Bigr).
$$
\par
In particular, $F_{\star,a,b}$ always extends to a meromorphic
function for $\Re(s)>1/2$, with polynomial growth in vertical strips.
\end{lemma}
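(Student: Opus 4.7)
The plan is to reduce the asserted identity to a purely local Euler factor computation at each prime dividing $ab$, and to dispatch the latter using the Hecke recursion for $\tau_{\star}$.

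First, the vanishing statement is immediate: if $\star=f$ and $ab<0$, then for every integer $n\not=0$ the two integers $an$ and $bn$ have opposite signs, so by~\eqref{tauf} at least one of $\tau_{f}(an)$ and $\tau_{f}(bn)$ vanishes, hence $F_{f,a,b}\equiv 0$. In all remaining cases I reduce to $a,b\geq 1$: for $\star=d$ using the evenness $\tau_{d}(n)=\tau_{d}(-n)$, and for $\star=f$ with $ab>0$ and $a,b<0$ by substituting $n\mapsto -n$. This costs only an overall factor of $2$ when $\star=d$, which is absorbed by the $2$ already built into $F_{d}(s)=2\zeta(s)^{4}/\zeta(2s)$, and the series becomes $\sum_{n\geq 1}\tau_{\star}(an)\tau_{\star}(bn)n^{-s}$. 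Since $\gcd(a,b)=1$ and $\tau_{\star}$ is multiplicative (by the Hecke eigenform property for $\star=f$, directly for $\star=d$), this Dirichlet series factors as an Euler product whose $p$-local factor is
$$
\Lambda_{p}(s)=\sum_{k\geq 0}\tau_{\star}(p^{v_{p}(a)+k})\,\tau_{\star}(p^{v_{p}(b)+k})\,p^{-ks}.
$$
For $p\nmid ab$ this equals the $p$-local factor of $\sum_{n\geq 1}\tau_{\star}(n)^{2}n^{-s}$; for $p^{\nu}\mid\mid ab$, coprimality forces exactly one of $v_{p}(a)$, $v_{p}(b)$ to be $\nu$ and the other $0$, so with $T=p^{-s}$ the local factor becomes
$$
D_{\nu}(T)=\sum_{k\geq 0}\tau_{\star}(p^{\nu+k})\,\tau_{\star}(p^{k})\,T^{k}.
$$

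The main step, and the only source of difficulty, is the local identity
$$
D_{\nu}(T)=\Bigl(\tau_{\star}(p^{\nu})-\frac{\tau_{\star}(p)\tau_{\star}(p^{\nu-1})}{p^{s}+1}\Bigr)\,D_{0}(T),
$$
which I prove by induction on $\nu\geq 1$ using the Hecke recursion $\tau_{\star}(p^{m+1})=\tau_{\star}(p)\tau_{\star}(p^{m})-\tau_{\star}(p^{m-1})$ for $m\geq 1$ (valid for a level $1$ Hecke eigenform, and also for $\tau_{d}$ with $\tau_{d}(p)=2$). For $\nu=1$, splitting off the $k=0$ term and applying the recursion in the remainder yields $D_{1}=\tau_{\star}(p)D_{0}-TD_{1}$, so $D_{1}=\tau_{\star}(p)(1+T)^{-1}D_{0}$, in agreement with the claim. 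For $\nu\geq 2$, the recursion applied to the first index $\tau_{\star}(p^{\nu+k})$ gives the three-term relation $D_{\nu}=\tau_{\star}(p)D_{\nu-1}-D_{\nu-2}$; substituting the inductive hypothesis and using the Hecke relation twice more, namely $\tau_{\star}(p)\tau_{\star}(p^{\nu-1})-\tau_{\star}(p^{\nu-2})=\tau_{\star}(p^{\nu})$ and $\tau_{\star}(p)\tau_{\star}(p^{\nu-2})-\tau_{\star}(p^{\nu-3})=\tau_{\star}(p^{\nu-1})$, produces the asserted expression.

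Multiplying these local factors together, and reinstating the factor of $2$ for $\star=d$, yields the product formula for $F_{\star,a,b}(s)$. The meromorphic continuation to $\Re(s)>1/2$ and the polynomial growth in vertical strips then transfer immediately from the corresponding properties of $F_{\star}(s)$, since the additional finite product over $p\mid ab$ is a rational function in the variables $p^{-s}$ whose only poles (zeros of $1+p^{-s}$) lie on $\Re(s)=0$, and whose numerator is a fixed polynomial independent of $s$.
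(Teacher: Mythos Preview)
Your proof is correct and follows essentially the same strategy as the paper: reduce to a local Euler factor computation and evaluate it via the Hecke recursion. The only organizational differences are that the paper first reduces to $b=1$ and then separates the part of $n$ supported on primes dividing $a$ (rather than writing the Euler product directly), and in the local step the paper applies the two--index identity $\tau_\star(p^{k+i})=\tau_\star(p^k)\tau_\star(p^i)-\tau_\star(p^{k-1})\tau_\star(p^{i-1})$ to obtain $G_i=\tau_\star(p^i)G_0-p^{-s}\tau_\star(p^{i-1})G_1$ in one stroke, whereas you use the one--step recursion to derive the three--term relation $D_\nu=\tau_\star(p)D_{\nu-1}-D_{\nu-2}$ and then induct; these are equivalent manipulations of the same Hecke relations.
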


\begin{proof}
  One sees immediately that it is enough to treat the case where $a$,
  $b\geq 1$ and $ab\not=1$. Then the assumption that $(a,b)=1$ allows
  us to write
$$
F_{\star,a,b}(s)=F_{\star,ab,1}(s)
$$
so that we can further reduce to the case where $b=1$, in which case
we write $F_{\star,a,1}=F_{\star,a}$. Now, writing any integer
$n\not=0$ (uniquely) as $n=jm$ where $j\geq 1$ has all prime factors
dividing $a$ and $m\not=0$ is coprime with $a$, and summing over $j$
first, we get
\begin{align*}
F_{\star,a}(s)&=\sum_{1\leq j\mid a^{\infty}}\sum_{(m,a)=1}
\tau_f(jm)\tau_f(ajm)|jm|^{-s}\\
&=\sum_{j\mid a^{\infty}}{\tau_f(j)\tau_f(aj)j^{-s}}
\sum_{(m,a)=1}\tau_f(m)^2|m|^{-s}\\
&=F_{\star}(s)\Bigl(\prod_{p\mid a}\sum_{k\geq
  0}\tau_f(p^k)^2p^{-ks}\Bigr)^{-1}
\sum_{j\mid a^{\infty}}\tau_f(j)\tau_f(aj)j^{-s},
\end{align*}
by multiplicativity of $\tau_f$.
\par
Now write
$$
a=\prod_{p\mid a}{p^{\nu_p}}
$$
the factorization of $a$. Again by multiplicativity, we get
$$
\sum_{j\mid a^{\infty}}\tau_f(j)\tau_f(aj)j^{-s}=
\prod_{p\mid a}\sum_{k\geq 0}{\tau_f(p^k)\tau_f(p^{k+\nu_p})p^{-ks}}. 
$$
\par
Let
$$
G_i=\sum_{k\geq 0}{\tau_f(p^k)\tau_f(p^{k+i})p^{-ks}}
$$
for some fixed prime $p$ and integer $i\geq 0$. For $i\geq 1$ and
$k\geq 1$, we have
$$
\tau_f(p^{k+i})=\tau_f(p^k)\tau_f(p^i)-\tau_f(p^{k-1})\tau_f(p^{i-1)},
$$
and therefore
$$
G_i=\tau_f(p^i)G_0-p^{-s}\tau_f(p^{i-1})G_1
$$
for $i\geq 1$. In particular, the case $i=1$ gives
$$
(1+p^{-s})G_1=\tau_f(p)G_0,
$$
which then implies that
$$
G_i=\Bigl(\tau_f(p^i)-\frac{\tau_f(p)\tau_f(p^{i-1})}{p^s+1}\Bigr)G_0
$$
for $i\geq 1$. Now, since $\nu_p\geq 1$ by definition, it follows that
$$
F_{\star,a}(s)=F_{\star}(s)\prod_{p\mid a}
\Bigl(\tau_f(p^{\nu_p})-\frac{\tau_f(p)\tau_f(p^{\nu_p-1})}{p^s+1}\Bigr)
$$
as claimed.
\end{proof}

\begin{proof}[Proof of Proposition~\ref{pr-mainterm}]
Using Proposition~\ref{TTT}, (1), we obtain first 
$$
\mathcal{B}_{\star}(a,b,Y)=\mathcal{B}^0_{\star}(a,b,Y)+O(p^{-1})
$$
where
$$
\mathcal{B}^0_{\star}(a,b,Y)=\sum_{n\not=0}{\tau_{\star}(an)\tau_{\star}(bn)
  W_{\star}\Bigl(\frac{an}{Y}\Bigr)W_{\star}\Bigl(\frac{bn}{Y}\Bigr)}.
$$
\par
Let $\varphi$ be the Mellin transform of
$$
x\mapsto
W_{\star}\Bigl(\frac{ax}{Y}\Bigr)W_{\star}\Bigl(\frac{bx}{Y}\Bigr). 
$$
\par
Again by Proposition~\ref{TTT}, (1), this is a holomorphic function,
bounded and decaying quickly in vertical strips, for $\Re(s)>0$. We
have the integral representation
$$
\mathcal{B}^0_{\star}(a,b,Y)=\frac{1}{2i\pi}
\int_{(2)}F_{\star,ab}(s)Y^s\varphi(s)ds,
$$
and we proceed to shift the contour to $\Re(s)=1/2+\eps$, for a fixed
$\eps>0$. The integral on the line $\Re(s)=1/2+\eps$ satisfies
$$
\frac{1}{2i\pi}
\int_{(1/2+\eps)}F_{\star,ab}(s)Y^s\varphi(s)ds
\ll Y^{1/2+\eps}
$$
where the implied constant depends on $(\star, a,b,\eps,w)$. On the
other hand, the unique singularity that occurs during the shift of
contour is the pole at $s=1$. 
\par
If $\star=f$, this is a simple pole and
$$
\res_{s=1}F_{\star,ab}(s)Y^s\varphi(s)=
Y\varphi(1)\res_{s=1}F_{f,ab}(s).
$$
\par
Since
$$
\varphi(1)=\int_{\Rr}{W_f(at)W_f(bt)dt}=\int_{\Rr}{w(at)w(bt)dt}
$$
by Proposition~\ref{TTT}, (3), and since it is well-known that
$$
\res_{s=1}F_{f}(s)=\|f\|^2(4\pi)^{k}\Gamma(k)^{-1}=c_f,
$$
(from Rankin-Selberg theory, see, e.g., \cite[(13.52), (13.53)]{Iw}),
we see that Lemma~\ref{lm-generate} exactly gives the result in the
case of a cusp form
\par
If $\star=d$, on the other hand, we have a pole of order $4$, and we
see that
$$
\res_{s=1}F_{d,ab}(s)Y^s\varphi(s)= YP_{ab}(\log Y)
$$
where the polynomial $P_{a,b}$ has degree at most $3$ and has
coefficient of $T^3$ given by
$$
\frac{1}{6}\frac{1}{\zeta(2)}\uple{\rho}_{ab,d}
\Bigl(\int_{\Rr}{W_d(at)W_d(bt)dt}\Bigr)T^3= \frac{2}{\pi^2}
\uple{\rho}_{ab,d}\Bigl(\int_{\Rr}{w(at)w(bt)dt}\Bigr)T^3,
$$
% $$
% \frac{1}{6}\varphi(1)\uple{\rho}_{ab,d}\res_{s=1}F_{d}(s)=2\pi^{-2}\int_{\Rr}{W_d(at)W_d(bt)dt}=
% 2\pi^{-2}\int_{\Rr}{w(at)w(bt)dt}
% $$
again by Proposition~\ref{TTT}, (3). This concludes the proof.
\end{proof}

\section{Proof of Theorem \ref{funda}}\label{ProofTHM1.1}

\subsection{First step}\label{ssec-first-step}

Let $p$ be a prime such that the condition \eqref{X1/2<p<X} holds. To
shorten the notation, we write
\begin{equation}\label{defY}
Y= p^2/X,
\end{equation}
which is $\geq 1$ under our assumption. We also write simply
$W=W_{\star}$ depending on whether we treat the case of cusp forms or
of the divisor function.
\par
From \eqref{basicp} in Proposition~\ref{stack1}, we deduce 
\begin{multline}\label{358*}
{\mathcal M}_{\star} (X,p;\kappa)=\frac{1}{pY^{\kappa/2}}
\ \underset{n_{1},\dots, n_{\kappa}\not=0}{\sum \cdots \sum}
\tau_{\star}(n_{1})\cdots \tau_{\star}(n_{\kappa})
W\Bigl(\frac{n_{1}}{Y}\Bigr)\cdots W\Bigl(\frac{n_{\kappa}}{Y}\Bigr)\\
\times \sum_{ 1\leq a < p}\hypk(a n_{1};p)\cdots
\hypk(a n_{\kappa};p),
%%+ O \bigl((X/p )^{\kappa/2}\log^\kappa X\bigr),
\end{multline}
which we write in the form
\begin{equation}\label{367}
{\mathcal M}_{\star} (X,p;\kappa):=\frac{1}{pY^{\kappa/2}}\bigl(
\Sigma_{1}+\Sigma_{2}\bigr)
%% + O\bigl( p^\frac{\kappa +1}{2}\bigr),
\end{equation}
where $\Sigma_{1}$ corresponds to the contribution of the $(n_{1},
\dots, n_{\kappa})$ such that $1\leq |n_{i}|< p/2$ for all $i$ and
$\Sigma_{2}$ is the complementary contribution of those $(n_{1},
\dots, n_{\kappa})$ such that $ |n_{i}|\geq p/2$ for one $i$ at least.

\subsection{Study of $\Sigma_{2}$}\label{ssec-sigma2}

We first deal with $\Sigma_2$, which is easy. By symmetry, we may
restrict to the case where $|n_{1}|\geq p/2$. By Deligne's bound
\begin{equation}\label{Del}
|\rho_f(n)|\leq d(n)
\end{equation}
(in the case of a Hecke eigenform $f$) and the Weil
bound~(\ref{eq-weil}) for Kloosterman sums, we have in both cases
$$
\Sigma_{2}\ll \Bigl(\sum_{|n_{1}|\geq p/2}
d(|n_{1}|)\, \Bigl| W\Bigl(\frac{n_1}{Y}\Bigr)
\Bigr|\Bigr)\times  \Bigl(\sum_{n\not=0}
d(|n|)\,\Bigl| W \Bigl(\frac{n}{Y}\Bigr)\Bigr|\Bigr)^{\kappa-1}.
$$
\par
Applying~(\ref{weak}) with $A\geq 3$, we deduce
$$
\Sigma_{2}\ll \ X^\epsilon\,(Y^A/p^{A-1})
  Y ^{\kappa-1}
$$
for any $\epsilon>0$ and hence
\begin{equation*}
  \Sigma_2\ll
  X^{\epsilon}p\, \Bigl(\frac{p}{X}\Bigr)^{2A}\, 
  \Bigl(\frac{p^2}{X}\Bigr)^{\kappa -1}.
\end{equation*}
\par
By assumption, we have  $p<X^{1-\delta}$, hence taking $A=A(\delta, \kappa)$
sufficiently large we  prove the inequality
\begin{equation}\label{Sigma2}
\Sigma_2\ll X^{-1},
\end{equation} 
which combined with \eqref{367}
is acceptable in view of the error term claimed in
\eqref{mark}. %  By \eqref{rain}, this inequality implies
% \begin{equation}\label{383}
% \Sigma_2 \ll  p^\frac{\kappa +1}{2}\,Y^{ \kappa},
% \end{equation}
% which is sufficient for our purpose. 

\subsection{Study of $\Sigma_{1}$}\label{4.1} 

The study of $\Sigma_1$ is the crux of the matter. To handle precisely
the sum of Kloosterman sums over $a$ in~(\ref{358*}), which is a sum
over a finite field, we will use a deep result in algebraic
geometry. But first of all, we must prepare the combinatorial
configurations of the arguments $n_1$, \ldots, $n_{\kappa}$, in order
to be able to detect the main term. We shall even put it in a more
general setting to cover the proof of Theorem \ref{MIXEDMOMENTS}. The
following definition deals with the {\it decreasing sequence of
  multiplicities}.

\begin{definition}[Configuration]\label{defconf}
  Let $p$ be prime. Let $\bfbeta :=(\beta_{1},\dots,
  \beta_{\kappa})\in(\PGL_2(\Fp))^{\kappa}$ be a $\kappa$-tuple of
  projective linear transformations modulo $p$. There exist an integer
  $\nu$ satisfying $1\leq \nu\leq \kappa$, a $\nu$-tuple
  $\uple{\mu}=(\mu_{1},\dots, \mu_{\nu})$ of positive integers
  $\mu_{i}$ satisfying
$$
\mu_{1}\geq  \mu_{2}\geq  \cdots  \geq 
\mu_{\nu}\geq 1\text{ and } \mu_{1}+\cdots +\mu_{\nu}=\kappa.
$$
and $\nu$ distinct elements $(\sigma_{1},\cdots,\sigma_\nu)\in
\bigl(\PGL_2(\Fp)\bigr)^\nu$, such that we have
$$
\bigl\{\beta_{1},\cdots, \beta_{\kappa}
\bigr\}=\{ \sigma_{1},\cdots , \sigma_{\nu}\},
$$
and
$$
|\{i\, :\, 1\leq i \leq \kappa,\, \beta_{i} =\sigma_{j}\}| =\mu_{j}
$$
for all $j$, with $1\leq j\leq \nu$. The integer $\nu$ and the
$\nu$-tuple $(\mu_{1},\dots, \mu_{\nu})$ are unique, and the latter
will be called the \emph{configuration} of $\bfbeta$, the integer
$\nu$ will be called the \emph{length of the configuration} and the
entries $\mu_j$ its \emph{ multiplicities}.
\par
If all the multiplicities $\mu_{j}$ are even, we will say that
$\bfbeta$ has a \emph{mirror configuration}.  In particular its length
$\mu$ is even.
\end{definition}

In the next proposition, we will see that the asymptotics for a sum of
products of Kloosterman sums shifted by the projective transformations $\beta_i$
depends only on the configuration of $\boldsymbol \beta $, rather than
on the precise values of the $\beta_i $.

\begin{proposition}\label{390} 
  Let $p$ be a prime.  Let $\kappa \geq 1$, ${\boldsymbol
    \beta}=(\beta_{1},\dots,\beta_{\kappa})\in \bigl(
  \PGL_2(\Fp)\bigr)^\kappa$ be a $\kappa$-tuple of elements of the
  projective linear group with associated configuration ${\boldsymbol
    \mu}=(\mu_{1}, \dots, \mu_{\nu})$.
\par
Consider the sum
$$
{\mathfrak S} (\kappa,{\boldsymbol \beta},p)= \sum_{\substack{a\bmod
    p\\ \beta_i\cdot a\not=0,\infty (1\leq i\leq \kappa)}}
\hypk(\beta_{1}\cdot a ;p)\cdots \hypk(\beta_{\kappa}\cdot a ;p).
$$
\par
We then have
\begin{equation}\label{579}
{\mathfrak S} (\kappa,{\boldsymbol \beta} ,p)= 
A({\boldsymbol \mu})p+O_\kappa(p^\frac{1}{2}),
\end{equation}
where $A({\boldsymbol \mu})$ is the product of integrals
$$
A({\boldsymbol \mu})=
\Bigl(\,\frac{2}{\pi}\int_{0}^{\pi}(2\cos\theta)^{\mu_{1}}\,\sin^2
\theta d \theta\Bigr)\cdots
\Bigl(\,\frac{2}{\pi}\int_{0}^{\pi}(2\cos\theta)^{\mu_{\nu}}\,\sin^2
\theta d \theta\Bigr).
$$
\par
The product $A({\boldsymbol \mu})$ is an integer, which is positive if
and only if $\boldsymbol \beta $ is in a mirror configuration and $0$
otherwise, in which case we have
$$
{\mathfrak S} (\kappa,{\boldsymbol \beta} ,p)= O(p^\frac{1}{2}).
$$ 
\par
Finally we have
\begin{equation}\label{valueA}
  A(2,2,\dots,2) = 1.
\end{equation}
\end{proposition}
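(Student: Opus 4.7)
\textbf{Proof plan for Proposition \ref{390}.}
The plan is to realize $\mathfrak{S}$ as a Frobenius trace sum attached to a tensor product of pullbacks of the Kloosterman sheaf, and then apply Deligne's equidistribution/Riemann Hypothesis, using the independence of monodromies for distinct $\PGL_2$-translates that is the key algebro-geometric input of the paper.

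First I would group the $\kappa$ factors by the common value of the $\beta_i$: with $\sigma_1,\dots,\sigma_\nu$ the distinct values and $\mu_j$ the respective multiplicities, the sum rewrites as
\[
\mathfrak{S}(\kappa,\boldsymbol{\beta},p)=\sum_{\substack{a\bmod p\\\sigma_j\cdot a\neq 0,\infty}}\prod_{j=1}^\nu\hypk(\sigma_j\cdot a;p)^{\mu_j}.
\]
The function $a\mapsto\hypk(\sigma_j\cdot a;p)$ is the Frobenius trace function at $a$ of the pullback $\sigma_j^*\KL$ of Deligne's rank-$2$ Kloosterman sheaf $\KL$ (pure of weight $0$), and the integrand is therefore the trace function of the lisse (on an open dense subset of $\mathbb{A}^1/\Fp$) sheaf
\[
\sheaf{F}=\bigotimes_{j=1}^\nu(\sigma_j^*\KL)^{\otimes\mu_j},
\]
of pure weight $0$, rank $2^\kappa$, and with a number of singular points bounded in terms of $\kappa$ only.

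Next I would invoke the two key inputs: Katz's theorem identifies the geometric monodromy of $\KL$ with $\SL_2$, and the independence result alluded to in the introduction asserts that for pairwise distinct $\sigma_1,\dots,\sigma_\nu\in\PGL_2(\Fp)$ the sheaves $\sigma_j^*\KL$ are geometrically independent, so that the geometric monodromy group of $\sheaf{F}$ is $\SL_2^\nu$ acting on $\bigotimes_j(\mathrm{std})^{\otimes\mu_j}$ through the product decomposition. Combining the Grothendieck--Lefschetz trace formula with Deligne's bound on the weights of $H^1_c$ yields
\[
\mathfrak{S}(\kappa,\boldsymbol{\beta},p)=p\cdot\dim\Bigl(\bigotimes_{j=1}^\nu(\mathrm{std})^{\otimes\mu_j}\Bigr)^{\SL_2^\nu}+O_\kappa(p^{1/2}),
\]
the error term being controlled by the sum of Betti numbers of $\sheaf{F}$, bounded in terms of $\kappa$ alone.

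Finally, the invariants factor as $\prod_j\dim((\mathrm{std})^{\otimes\mu_j})^{\SL_2}$. By Weyl's integration formula applied to the maximal compact subgroup $\mathrm{SU}(2)$, with Sato--Tate measure $\tfrac{2}{\pi}\sin^2\theta\,d\theta$ and character $\chi_{\mathrm{std}}(\theta)=2\cos\theta$, each factor equals the moment $\tfrac{2}{\pi}\int_0^\pi(2\cos\theta)^{\mu_j}\sin^2\theta\,d\theta$, which produces the stated formula for $A(\boldsymbol{\mu})$. This moment equals the Catalan number $C_{\mu_j/2}$ when $\mu_j$ is even, and vanishes when $\mu_j$ is odd (the integrand being odd about $\pi/2$), which gives simultaneously the mirror-configuration criterion for positivity, the vanishing case, and the special value $A(2,\dots,2)=C_1^\nu=1$. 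The main obstacle, and essentially the heart of the proof, is the monodromy-independence input: one must rule out every possible geometric isomorphism between distinct $\PGL_2$-translates of $\KL$ (and their duals), which rests on a precise analysis of the local monodromy of $\KL$ at its singularities $0$ and $\infty$, and is the deep ingredient the paper attributes to results on independence of Kloosterman sheaves.
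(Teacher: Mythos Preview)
Your plan is correct and matches the paper's proof essentially step for step: group by distinct $\sigma_j$, interpret the summand as the trace function of $\bigotimes_j(\sigma_j^*\KL)^{\otimes\mu_j}$, identify the monodromy as $\SL_2^\nu$ via Goursat--Kolchin--Ribet, apply Deligne (the paper phrases this as Katz's effective equidistribution theorem rather than raw Grothendieck--Lefschetz, but that is the same thing), and read off the main term as a product of Sato--Tate moments. One refinement: the Goursat--Kolchin--Ribet input is not merely that distinct translates $\sigma_i^*\KL$ are non-isomorphic, but that they are non-isomorphic \emph{up to twist by any rank-one sheaf}; the paper proves precisely this claim by a short ramification argument (wild ramification at $\infty$ forces $\sigma\cdot\infty=\infty$, tame ramification at $0$ forces $\sigma\cdot 0=0$, hence $\sigma$ is a homothety, and that case is Michel's lemma), which is the concrete content of the ``local monodromy analysis'' you allude to.
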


 This is a generalization of a result of Fouvry, Michel, Rivat and
  S\'ark\"ozy (see~\cite[Lemma 2.1]{FMRS}), which only dealt with the
  case where the $\beta_{i}$ are all diagonal and distinct modulo $p$.

\begin{proof}
  By the definition of the configuration, the sum equals
$$ 
{\mathfrak S} (\kappa,{\boldsymbol \beta},p)=
\sum_{\substack{a\in\Fp,\ \sigma_i\cdot a\not=0,\infty\\ (1\leq i\leq
    \nu)}}\hypk(\sigma_1\cdot a ;p)^{\mu_1}\cdots \hypk (\sigma_\nu
\cdot a ;p)^{\mu_\nu},
$$
where the elements $\sigma_i$, $1\leq i\leq \nu$, are distinct in
$\PGL_2(\Fp)$.
\par
For $\ell\not= p$, let $\KL$ be the (normalized) $\ell$-adic
Kloosterman sheaf constructed by Deligne and studied by Katz
in~\cite{Katz-gskm}. This is a lisse $\ov\Qq_\ell$-sheaf of rank $2$
on $\Gg_{m,\Fp}$, which has trivial determinant. For some isomorphism
$\iota\,:\,\ov\Qq_{\ell}\rightarrow \Cc$, it satisfies
$$
\iota(\mathrm{trace}(\mathrm{Frob}_{a,\Fp}|\KL))=-\hypk (a;p)
$$
for any $a\in\Fp^\times$. Moreover, $\KL$ is Lie-irreducible, tamely
ramified at $0$ with a single unipotent Jordan block, and wildly
ramified at $\infty$ with Swan conductor $1$ and with a single break
at $1/2$.
\par
Given $\gamma\in\PGL_2(\Fp)$, let $\gamma^*\KL$ be the pullback of
$\KL$ by the fractional linear transformation $\gamma:\ x \mapsto
\gamma\cdot x$; this sheaf is lisse on
$\mathbb{P}^1_{\Fp}-\{\gamma^{-1}(\{0,\infty\})\}$ and for any
$a\in\Fp$ such that $\gamma \cdot a\not=0,\infty$, it satisfies
$$
\iota(\mathrm{trace}(\mathrm{Frob}_{a,\Fp}|\gamma^*\KL))
=-\hypk(\gamma\cdot a ;p).
$$
\par
Katz~\cite{Katz-gskm} computed the geometric monodromy group of
$\KL$, and showed that it is equal to $\SL_2$, and coincides with the
arithmetic monodromy group of $\KL$. The same is therefore true for
$\gamma^*\KL$.

We make the following:
\begin{claim}
  For $\sigma_1$ and $\sigma_2$ distinct elements of $\PGL_2(\Fp)$ and
  $\mscL$ any rank one sheaf, lisse on some non-empty open subset of
  $\mathbb{P}^1_{\Fp}$, the sheaves $\sigma_1^*\KL\otimes\mscL$ and
  $\sigma_2^*\KL$ are not geometrically isomorphic.
\end{claim}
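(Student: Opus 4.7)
The plan is to first pull back both sides of a hypothetical geometric isomorphism $\sigma_1^*\KL \otimes \mscL \cong \sigma_2^*\KL$ by $\sigma_1^{-1}$. Setting $\tau = \sigma_2\sigma_1^{-1} \neq 1$ in $\PGL_2(\Fp)$ and $\mscL' = (\sigma_1^{-1})^*\mscL$, this reduces the Claim to the following: for every $\tau \in \PGL_2(\Fp)$ with $\tau \neq 1$ and every rank-one sheaf $\mscL'$ lisse on some non-empty open subset of $\mathbb{P}^1_{\Fp}$, the sheaves $\KL \otimes \mscL'$ and $\tau^*\KL$ are not geometrically isomorphic.

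The strategy is to compare inertia representations at well-chosen points. The key inputs are the given local structures of $\KL$: its inertia representation at $0$ is a non-split unipotent Jordan block (in particular, non-semisimple, of rank $2$), while at $\infty$ it is wild and irreducible of rank $2$, with both breaks equal to $1/2$ and Swan conductor $1$. Pullback by $\tau$ transfers these structures to the points $\tau^{-1}(0)$ and $\tau^{-1}(\infty)$. On the other hand, at any point $x_0$ where $\KL$ is lisse, the inertia representation of $\KL \otimes \mscL'$ is forced to be $\chi \oplus \chi$ for $\chi = \mscL'|_{I_{x_0}}$, which is semisimple and reducible -- this rigidity is what I will play against the irreducibility or non-semisimplicity of pieces of $\tau^*\KL$.

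I would then split into cases by the position of $\tau^{-1}(\{0,\infty\})$ relative to $\{0,\infty\}$. When $\tau$ does not stabilize $\{0,\infty\}$ set-wise, there exists $x_0 \in \tau^{-1}(\{0,\infty\}) \cap \mathbb{G}_m$, and the inertia representation of $\tau^*\KL$ at $x_0$ is either irreducible (if $x_0 = \tau^{-1}(\infty)$) or a non-semisimple Jordan block (if $x_0 = \tau^{-1}(0)$), contradicting the form $\chi \oplus \chi$ of $(\KL \otimes \mscL')|_{I_{x_0}}$. The case $\tau(x) = c/x$ (which swaps $0$ and $\infty$) is handled analogously at $x_0 = 0$: the Swan conductor match forces $\mscL'|_{I_0}$ to be a wild character $\chi$ of break $1/2$, whence $(\KL \otimes \mscL')|_{I_0} \cong J \otimes \chi$ has $\chi$ as a sub-representation, while $\tau^*\KL|_{I_0}$ inherits the irreducibility of $\KL|_{I_\infty}$, a contradiction.

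The remaining case, $\tau : x \mapsto ax$ with $a \neq 1$, is the main obstacle. Matching inertia at $0$ forces $\mscL'|_{I_0}$ to be trivial (otherwise $J \otimes \chi \not\cong J$, by comparing eigenvalues), so $\mscL'$ is lisse at $0$; the absence of new singularities on $\mathbb{G}_m$ forces lisseness there; hence $\mscL'$ is lisse on $\mathbb{A}^1_{\Fp}$. For the Swan conductor of $\KL \otimes \mscL'$ at $\infty$ to remain $1$, the break of $\mscL'$ at $\infty$ must be at most $1/2$; but rank-$1$ sheaves lisse on $\mathbb{A}^1_{\Fp}$ are Artin--Schreier sheaves, whose breaks at $\infty$ are non-negative integers, so $\mscL'$ must be geometrically trivial. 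The Claim thus reduces to $\KL \not\cong [a]^*\KL$ geometrically for $a \neq 1$, which I would resolve by invoking Katz's classification of the geometric monodromy and symmetries of the Kloosterman sheaf in \cite{Katz-gskm}; a variant argument compares trace functions and uses the standard evaluation of $\sum_{x \in \Fp^\times}\hypk(ax)\hypk(x) = O(1)$ against the $L^2$-mass $\sum_{x \in \Fp^\times}|\hypk(x)|^2 \sim p$, which precludes the scalar proportionality $\hypk(ax) = \alpha \hypk(x)$ that a geometric isomorphism would imply.
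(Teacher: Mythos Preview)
Your argument is correct and follows the same strategy as the paper: reduce to $\sigma_1=\mathrm{Id}$ and use the local ramification structure of $\KL$ at $0$ and $\infty$ to force $\tau$ to be a homothety, then treat that case separately. The paper's execution is shorter: it observes once and for all that a rank-one sheaf has integer breaks, so $\KL\otimes\mscL'$ is automatically wild at $\infty$, forcing $\tau(\infty)=\infty$; and $\KL\otimes\mscL'$ is always ramified at $0$, forcing $\tau(0)=0$. This replaces your case split (non-stabilizing $\tau$, the swap $x\mapsto c/x$, homothety) by two lines. In your swap case, the intermediate claim that $\chi=\mscL'|_{I_0}$ must have break $1/2$ is false for a rank-one character (breaks are integers), but you do not actually use it: the contradiction you give --- $J\otimes\chi$ has $\chi$ as a subrepresentation while $\KL|_{I_\infty}$ is irreducible --- stands on its own. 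Likewise, ``rank-one lisse on $\Aa^1$ are Artin--Schreier'' is not literally true, but the consequence you need (integer break at $\infty$, hence break $0$, hence geometrically trivial) holds for any rank-one sheaf. The paper simply cites \cite{Mi} for the homothety case; your reduction to $\mscL'$ geometrically trivial and the trace/correlation argument $\sum_x\hypk(ax)\hypk(x)=O(1)$ versus $\sum_x\hypk(x)^2\sim p$ gives a self-contained alternative.
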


\begin{proof}
  We may assume that $\sigma_1=\mathrm{Id}$ and that $\sigma=\sigma_2$
  is not the identity. If $\sigma$ is an homothety, the claim was
  proven in \cite[Lemme 2.4]{Mi}. We now reduce to this case. Assume
  that $\KL\otimes\mscL$ and $\sigma^*\KL$ are geometrically
  isomorphic. Since $\mscL$ is of rank $1$, its only possible breaks
  at infinity are integer, and hence $\KL\otimes\mscL$ is wildly
  ramified at $\infty$. So $\sigma^*\KL$ is also wildly ramified at
  infinity, which means that $\sigma\cdot\infty=\infty$. Furthermore,
  $\KL\otimes\mscL$ is also ramified at $0$, and hence $\sigma^*\KL$
  must also be ramified, which means $\sigma\cdot 0=0$. But this
  implies that $\sigma$ is a homothety, and we apply the result
  of~\cite{Mi}.
% , then $[\gamma]^*\KL\otimes\mscL$ has to be wildly
%   ramified at $\infty$ with a break at $1/2$; since the only possible
%   breaks of $\mscL$ at infinity are integers ($\mscL$ being of rank
%   $1$) we conclude that $\mscL$ is tamely ramified at $\infty$ and
%   that $\gamma \cdot {\infty}=\infty$. Now if $0\not=\gamma^{-1} \cdot
%   0$ the restriction of $\KL$ to the inertia subgroup
%   $I_\gamma^{-1}\cdot 0$ is trivial and cannot be isomorphic to the
%   restriction of $[\gamma]^*\KL\otimes\mscL$ which is at least tame
%   with a single Jordan block (or possibly wildly ramified). Therefore
%   $\gamma$ fixes both $0$ and $\infty$ pointwise and is necessarily an
%   homothety.
\end{proof}

Since the $\sigma_i,\ (i=1,\cdots ,\nu)$ are distinct elements in
$\PGL_2(\Fp)$, it follows from the Goursat-Kolchin-Ribet criterion
(see~\cite[Prop. 1.8.2]{katz-esde}) that the geometric monodromy group
of the tensor product
$$
\sigma_1^*\KL\otimes\cdots\otimes\sigma_\nu^*\KL
$$
is equal to its arithmetic monodromy group and is the full product
group
$$
\SL_2\times\cdots\times\SL_2,
$$
which indicates an asymptotic independence of the values of the
Kloosterman sums $\hypk(\sigma_i \cdot a ;p)$ as $a$ varies over $\Fp$
such that $\sigma_i \cdot a\not=0,\infty$, ($i=1,\dots,\nu$).
\par
Using Katz's effective form of Deligne's equidistribution theorem
(\cite[\S 3.6]{Katz-gskm}), we deduce that
$$
\frac{1}{p-1}\sum_{\substack{a\in\Fp,\ \sigma_i.a\not=0,\infty\\
    (1\leq i\leq \nu)}}{\rm Kl\,}(\beta_{1}\cdot a,1;p)^{\mu_1}\cdots
{\rm Kl\,}(\beta_{\nu}\cdot a,1;p)^{\mu_\nu}=\prod_{i=1}^\nu
\mu_{ST}((2\cos(\theta))^{\mu_i})+O_{\mu_1,\cdots,\mu_\nu}(p^{-1/2}),
$$
where the implied constant is independent of $p$ and $\mu_{ST}$
denotes the Sato-Tate probability measure on $[0,\pi]$, which is given
by
$$
\mu_{ST}(f(\theta))=\frac{2}{\pi}\int_0^\pi f(\theta)\sin^2\theta
d \theta
$$
(recall that $[0,\pi]$ is identified with the set of conjugacy classes
of the compact group $\mathrm{SU}_2(\Cc)$ via the map
$$
g\in \mathrm{SU}_2(\Cc)\mapsto \mathrm{trace}(g)=2\cos\theta,
$$
and that the Sato-Tate measure is the image of the probability Haar
measure of $\mathrm{SU}_2(\Cc)$ under this map.)
\par
It follows by character theory of compact groups that
$$
\mathrm{mult}(\mu)=\mu_{ST}((2\cos \theta)^{\mu})
$$ 
is precisely the multiplicity of the trivial representation in the
$\mu$-th tensor power $\mathrm{Std}^{\otimes \mu}$ of the
standard $2$-dimensional representation of $\mathrm{SU}_2(\Cc)$.  In
particular, $\mathrm{mult}(\mu)$ is a non-negative integer, and it
is zero if and only if $\mu$ is odd (this is obvious when writing
the integrals; representation-theoretically, $\mathrm{mult}(\mu)=0$
if $\mu$ is odd because $\begin{pmatrix}-1&0\\0&-1
\end{pmatrix}$ acts by multiplication by $(-1)^{\mu}$ on
$\mathrm{Std}^{\otimes \mu}$, and $\mathrm{mult}(\mu)\geq 1$ for $\mu$
even, because $\mathrm{Std}^{\otimes\mu}$ is self-dual so
$\mathrm{mult}(2\mu)$ is the multiplicity of the trivial
representation in $\mathrm{End}(\mathrm{Std}^{\otimes\mu})$, and the
identity endomorphism gives an invariant subspace; in fact, one can
check that $\mathrm{mult}(2\mu)=\binom{2\mu}{\mu}/(\mu+1)$, a Catalan
number.)
\par
As a consequence
$$
A(\mu_1,\cdots,\mu_\nu)=\prod_{i=1}^\nu \mathrm{mult}(\mu_i),
$$
is a non-negative integer, and it is non-zero if and only if all the
$\mu_i$ are even, which corresponds precisely to the mirror
configuration. Since $\mathrm{mult}(2)=1$, we also have
$A(2,\cdots,2)=1$.
\end{proof}

\begin{remark}
  Expanding the Kloosterman sums, we see that
  $\mathfrak{S}(\kappa,\bfbeta,p)$ is a character sum in $\kappa+1$
  variables. The proposition shows that this character sum has
  square-root cancellation, except if $\bfbeta$ is in mirror
  configuration. As in~\cite{FKM}, we see that the \emph{structure} of
  $\mathfrak{S}(\kappa,\bfbeta,p)$ (as a sum of products of
  Kloosterman sums) is crucial to our success, since it reduces the
  problem to detecting cancellation in the single variable $a$.
\par
If $\kappa=2$ and if $\beta_1(a)=b_1a$ and $\beta_2(a)=b_2a$ are
diagonal, we can use the fact that the Kloosterman sum is the discrete
Fourier transform of the function $x\mapsto e(\bar{x}/p)$ (and
$0\mapsto 0$) to get
$$
\mathfrak{S}(2,(\beta_1,\beta_2),p)=
\sum_{a\in\Fpt}{\hypk(b_1a;p)\hypk(b_2a;p)}=
\sum_{x\in\Fpt}{e\Bigl(\frac{\bar{x}(1-\bar{b}_1b_2)}{p}\Bigr)}-\frac{1}{p}
$$
by the discrete Plancherel formula. This is essentially a Ramanujan
sum, and hence we see that the second moment (as in~(\ref{19:56}))
does not require such delicate considerations. Moreover, because the
error term is here $\ll p^{-1}$ (instead of $p^{-1/2}$), the error
term for the second moment is better than for the others, which
explains the greater range of uniformity in the formula~(\ref{19:56})
of Lau and Zhao. More generally, for $\kappa=2$ and arbitrary
$\beta_1$, $\beta_2\in\PGL_2(\Fp)$, the sum
$\mathfrak{S}(2,(\beta_1,\beta_2),p)$ can be identified with a special
case of a \emph{correlation sum} as defined in~\cite[\S 1.2]{FKM}, for
the trace weight $K(n)=e(\bar{n}/p)$. The results of~\cite[Th. 9.1, \S
11.1]{FKM} imply the statement of Proposition~\ref{390} for
$\kappa=2$.
\end{remark}

We can now continue our study of the sum $\Sigma_{1}$ defined in
\eqref{367}. Since we have $p\nmid n_i$, we have
$$
\hypk(an_{i}; p)= \hypk(\beta_{i}\cdot a;p),
$$ 
where $\beta_{i}\in  \PGL_2(\Fp)$ corresponds to the matrix  
$$
\begin{pmatrix} n_i&0\\0&1\end{pmatrix}\mods{p}.
$$
 We denote ${\boldsymbol \beta}=(\beta_{i},\dots,\beta_{\kappa})$. We also
denote by ${\boldsymbol \mu}({\boldsymbol \beta})$ the configuration of
$\boldsymbol \beta$.  Thus, by Proposition \ref{390} and by \eqref{weak}, we have the equalities
\begin{align}
  \Sigma_{1}&= p\underset{1\leq |n_{1}|,\dots, |n_{\kappa}| <p/2}{\sum
    \cdots \sum} A\bigl({\boldsymbol \mu}({\boldsymbol \beta})\bigr)\,
  \tau_{\star}(n_{1})\cdots \tau_{\star}(n_{\kappa})
  W\Bigl(\frac{n_1}{Y}\Bigr)\cdots W\Bigl(\frac{n_\kappa}{Y}\Bigr)
  \nonumber\\
  &+O\Bigl(p^{\frac{1}{2}} \Bigl(\, \sum_{1 \leq |n| < p/2} d(|n|)
  \,\Bigl| W\Bigl(\frac{n}{Y}\Bigr)\Bigr|
  \,\Bigr)^\kappa\,\Bigr)\nonumber\\
  &= % \, \Sigma_{1,M} + O \bigl(\,p^{-\frac{1}{2}}\bigl(\,
  % \Sigma_{1,E}\,\bigr)^\kappa\bigr)=
 p \Sigma_{1,M} + O
  \bigl(\,p^{\frac{1}{2}+\eps}Y^{\kappa}\bigr),
\label{632}
\end{align}
say, for any $\epsilon>0$.

Collecting \eqref{367}, \eqref{Sigma2} and \eqref{632}, the proof of
Theorem \ref{funda} is already complete when $ \kappa$ is odd, since
trivially $\Sigma_{1,M}=0$ in that case.

%%%%%%%%%%%%%%%%%%%%%%%%%
\subsection{Study of $\Sigma_{1,M}$ for even $\kappa$} 

Remark that, by the definition of $\Sigma_{1}$, we have the congruence
$n_{i}\equiv n_{j}\bmod p$ if and only if $n_{i}=n_{j}.$ In the
summation over $\boldsymbol n =(n_{1},\dots, n_{\kappa})$ defining
$\Sigma_{1,M}$, we can restrict the summation over the set of
$\boldsymbol n$ such that the associated $\boldsymbol \beta$ is in
mirror configuration by Proposition~\ref{390}.

We now show that, in fact, the main contribution comes from the
$\boldsymbol n$ in mirror configuration such that the configuration of
the associated $\boldsymbol \beta$ is $(2,2,\dots, 2)$.  It is easy to
see that, for the remaining $\boldsymbol n$, the associated
configuration $\boldsymbol \mu= (\mu_{1},\dots, \mu_{\nu})$ is such
that the length $\nu$ is at most $ \kappa/2-1$ distinct elements, and
satisfy $\mu_1\geq 4$.

The equality \eqref{valueA} and some combinatorial considerations lead
to the following equality
\begin{align}
  \Sigma_{1,M}& =3\cdot 5\cdots (\kappa-1)\Bigl( \sum_{1\leq
    |n|<p/2}\tau_{\star} (n)^2\,
  W\Bigl(\frac{n}{Y}\Bigr)^2\Bigr)^\frac{\kappa}{2}\nonumber
  \\
  & +O\Biggl(\, \sum_{1\leq \nu\leq \frac{\kappa}{2}-1} \ \sum_{
    \substack{
      \mu_1\geq\cdots \geq \mu_{\nu}\geq 2\\
      2\mid \mu_i,\, \mu_1\geq 4\\
      \mu_1+\cdots \mu_\nu =\kappa }}\, \prod_{i=1}^\nu \sum_{1\leq
    |n|<p/2}d(|n|)^{\mu_i} \, \Bigl|
  W\Bigl(\frac{n}{Y}\Bigr)^{\mu_i}\Bigr|
  \Biggr)\nonumber\\
  &=m_{\kappa}\Bigl( \sum_{1\leq |n|<p/2}\tau_{\star} (n)^2\,
  W\Bigl(\frac{n}{Y}\Bigr)^2\Bigr)^\frac{\kappa}{2} +
  O(Y^{\kappa/2-1+\eps})\label{1M}
%%\label{652}
\end{align}
for any $\epsilon>0$, the error term arising easily from \eqref{weak}
(recall that $m_{\kappa}$ is given by~(\ref{eq-mk}) and is the
$\kappa$-th moment of a standard Gaussian).  We therefore see that the
proof of Theorem~\ref{funda} is completed by combining \eqref{367},
\eqref{Sigma2}, \eqref{632} and \eqref{1M} together with
Proposition~\ref{pr-mainterm}, applied with $a=b=1$.

\subsection{Further remarks}\label{ssec-comments}

We compare here the estimate of Theorem~\ref{funda} with other bounds
for the moments which can be derived straightforwardly from earlier
results. For simplificity, we restrict our attention to the case of
cusp forms.
\par
% In particular, we get the upper bounds
% \begin{equation}\label{mark1}
% {\mathcal M}_{\kappa} (X,p) \ll_{\kappa, f}   
% p^{1-\frac{\kappa}{2}}X^\frac{\kappa}{2} +  p^\frac{\kappa+1}{2},
% \end{equation}
% uniformly for $X^\frac{1}{2} \leq p\leq X$ if $\kappa$ is even, and
% $$
% {\mathcal M}_{\kappa} (X,p) \ll_{\kappa, f}   p^\frac{\kappa+1}{2}
% $$
% if $\kappa$ is odd. 
% \par
First, we note that it is fairly easy to deduce from
Proposition~\ref{stack1} and from Proposition~\ref{TTT} that
\begin{equation}\label{bound}
  E_f(X, c, a) \ll_{ f}  \frac{c}{X^{1/2}}\,d(c)^{5/2},
\end{equation}
for any $c\geq 1$, $X\geq c$ and any integer $a$. When $c\leq
X^{2/3}$, this statement is better than the bound 
$$
E_f(X,c,a)\ll X^{1/2+\eps}c^{-1/2}
$$ 
coming from Deligne's estimate for $\rho_f(n)$ (this is very similar
to the result first proved by Smith~\cite[(4)]{Sm} which has the same
range of uniformity; see also the remarks in~\cite[p. 276]{Bl} and the
work of Duke and Iwaniec~\cite[Th. 2]{duke-iwaniec}).  Combining these
two bounds in the definition \eqref{defM} of $\mathcal M$, we obtain
$$ 
{\mathcal M}_{f} (X,c;\kappa)\ll_\epsilon
X^{\epsilon} \Bigl(\frac{c^2}{X}\Bigr)^{\kappa/2}
\min\Bigl(1,
\frac{X^2}{c^{3}} \Bigr)^{\kappa/2}.
$$
\par
However, for $\kappa\geq 2$, we can also write
$$
{\mathcal M}_{f} (X,c;\kappa) \leq \Bigl(\max_{a\bmod c}\bigl\vert E_f
(X,c,a) \bigr\vert \Bigr)^{\kappa -2} \Bigl( \frac{1}c\sum_{a\bmod c}\bigl\vert
E_f (X,c, a)\bigr\vert^2 \Bigr),
$$
and then using the result~(\ref{19:56}) of Lau and Zhao, we deduce a
second inequality
\begin{equation}\label{13:48}
  {\mathcal M}_{f} (X,c;\kappa)\ll_\epsilon 
  X^{\epsilon} 
  \Bigl(\frac{c^2}{X}\Bigr)^{\kappa/2-1},
\end{equation}
which holds uniformly for $X^\frac{1}{2}\leq c \leq X$.  We then see
that our result in Theorem \ref{funda}, for $c=p$ a prime, improves
\eqref{13:48} for
\begin{equation}\label{rain}
X^\frac{1}{2}<p<X^\frac{2}{3}  \text{ and } \kappa \geq 3.
\end{equation}

We conclude by noting that Theorem~\ref{funda} can be extended without
much effort to cusp forms $f$ of arbitrary level and nebentypus, which
are not necessarily Hecke forms.  On the other hand, it does not seem
straightforward to extend the result to an arbitrary composite modulus
$c\geq 1$.

\subsection{Proof of Corollary~\ref{cor-gaussian}}
\label{sec-gaussian}

Corollary~\ref{cor-gaussian} is an easy consequence of the fact that
convergence to a Gaussian can be detected by convergence of the
moments to the Gaussian moments (see, e.g.,~\cite[Th. 8.48,
Prop. 8.49]{Br}).  For $p$ prime, let
$$
X=p^2/\Phi(p),\quad\quad \Phi(p)\ra +\infty,\quad\quad \Phi(p)\ll
p^{\eps}.
$$
\par
Denoting
$$
\nmom_{\star}(X,p;\kappa)= \frac 1p\sum_{a\in\Fpt}
\Bigl(\frac{E_{\star}(X,p,a)} {\sqrt{c_{\star,w}}}\Bigr)^{\kappa},
$$
we see from Theorem~\ref{funda} that for any $\eps>0$, we have
$$
\nmom_{\star}(X,p;\kappa)=m_{\kappa}+O\Bigl(\Phi(p)^{-1/2+\eps}+
p^{-\frac{1}{2}+\epsilon}\Phi(p)^{\kappa/2}\Bigr)\lra m_{\kappa}
$$
as $p\rightarrow +\infty$. Since this holds for any fixed integer
$\kappa\geq 1$, this finishes the proof.

\begin{remark}\label{rm-clt}
  (1) If $X=p^{2-\delta}$ for some fixed $\delta>0$, we can not prove
  the Central Limit Theorem, but nevertheless, we still deduce that
  the $\kappa$-moments converge to Gaussian moments  when
$$
1\leq \kappa\leq \Bigl\lfloor \frac{1}{\delta}\Bigr\rfloor.
$$
\par
(2) In this result, the Gaussian moments arise in
Proposition~\ref{390}, and in fact the combinatorics of the
computation is the same as in a standard case of the Central Limit
Theorem, namely the convergence in distribution to a standard Gaussian
of a sequence
$$
\mathrm Y_n=\frac{2\cos(\mathrm X_1)+\cdots +2\cos(\mathrm X_n)}{\sqrt{n}}
$$
where the $(\mathrm X_i)$ are independent random variables (defined on some
probability space) distributed on $[0,\pi]$ according to the Sato-Tate
measure.
\par
(3) It is natural to expect that an asymptotic formula
\begin{equation}\label{201}
  \mathcal{M}_{\star}(X,p;\kappa)\sim C_{\star}(\kappa),
\end{equation}
should be true uniformly for any even $\kappa$,  and 
$$
X^{\frac{1}{2}+\eps} \leq p\leq X^{1-\delta },
$$
for some fixed $\delta \ (0<\delta <1/2)$, which (with a corresponding
upper-bound for the odd moments) would extend
Corollary~\ref{cor-gaussian} to this range. This conjecture is true
for $\kappa=2$ (by \eqref{19:56}), and is in agreement with the square
root cancellation philosophy~(\ref{srp}).
% }, which suggests that we should have
% \begin{equation}\label{srp}
%   E_{\star}(X,p,a)\ll  X^\epsilon,
% \end{equation}
% uniformly for $p$ prime and $a$ coprime to $p$ such that $p\leq
% X$. % (This is true
% % for $c=p$ and $a=0$ by~(\ref{295}).)
\par
Another partial indication in favor of this conjecture is that a lower
bound of that size holds: considering $\star=f$ for simplicity, and
taking $\kappa\geq 2$ even, we have
$$
{\mathcal M}_{f}(X,p;2)\leq \bigl(\,{\mathcal
  M}_{f}(X,p;\kappa)\,\bigr)^\frac{2}{\kappa}\cdot \Bigl(
\frac 1p\sum_{1\leq a
  \leq p}1\Bigr)^{1-\frac{2}{\kappa}},
$$
and, by combining this with \eqref{19:56}, we obtain the lower bound
$$
{\mathcal M}_{f}(X,p;\kappa)\gg
1
$$
uniformly for $X^\frac{1}{2} \leq c\leq X^{1-\delta}$.
\end{remark}

%%%%%%%%%%%%%%%
\section{Proof of Theorem \ref{MIXEDMOMENTS} }\label{PROOFMIXED}

The proof of this Theorem has many similarities with the proof of
Theorem \ref{funda}, particularly in the computation of the error
terms. We will mainly concentrate on the study of the main term of the
mixed moment ${\mathcal M} _\star(X,p; \kappa , \lambda; \gamma)$.

We suppose that \eqref{X1/2<p<X} is satisfied and that $p$ is
sufficiently large in terms of $\gamma$. We start from the definition
\eqref{defMmix} and apply the same computations leading to
\eqref{358*}, \eqref{367} and \eqref{Sigma2} to write the equality
\begin{align}\label{fifi}
  &{\mathcal M} _\star(X,p; \kappa , \lambda;
  \gamma)=\frac{1}{pY^\frac{\kappa +\lambda}{2}} \bigl( \Sigma_3
  +O_\delta(X^{-1})\bigr)
\end{align}
where
\begin{align}
  \Sigma_3= &\underset{1\leq \vert m_1\vert,\dots,\vert m_\kappa\vert
    <p/2 }{\sum\cdots \sum} \tau_\star (m_1)\cdots \tau_\star
  (m_\kappa)
  W\Bigl( \frac{m_1}{Y}\Bigr)\cdots W\Bigl( \frac{m_\kappa}{Y}\Bigr)\label{123}\\
  &\times \underset{1\leq \vert n_1\vert,\dots,\vert n_\lambda\vert
    <p/2 }{\sum\cdots \sum} \tau_\star (n_1)\cdots \tau_\star
  (n_\lambda) W\Bigl( \frac{n_1}{Y}\Bigr)\cdots
  W\Bigl( \frac{n_\lambda}{Y}\Bigr)\nonumber \\
  &\times \sum_{\substack{1\leq a < p\\ a,\gamma\cdot
      a\not=0,\infty}}\hypk(m_{1 }a;p)\cdots \hypk(
  m_{\kappa}a;p)\hypk(n_1(\gamma \cdot a) ;p)\cdots
  \hypk(n_{\lambda}(\gamma\cdot a) ;p).\nonumber
\end{align}
\par
Since $p$ divides none of the $m_i$ or $n_j$, we see that the inner
sum over $a$ is equal to $ {\mathfrak S} (\kappa+\lambda, \boldsymbol
\beta, p)$, as defined in Proposition \ref{390}, where
\begin{equation}\label{defbeta1}
  \boldsymbol \beta =\bigl(h_{m_1} ,\dots, h_{m_\kappa}, 
  h_{n_1}\circ\gamma, \dots, h_{n_\lambda}\circ \gamma\bigr),
\end{equation}
and $h_m$ denotes the homothety
$$
h_m=\begin{pmatrix} m& 0\\0&1
\end{pmatrix}\in\PGL_2(\Fp).
 $$
\par
To apply Proposition \ref{390}, we have to understand which
$\boldsymbol \beta$ are in mirror configuration, in the sense of
Definition \ref{defconf}. This depends on whether $\gamma$ is diagonal
or not. 

\subsection{When $\gamma$ is not diagonal} 
\label{casenondiag}

If $\gamma$ is not a diagonal matrix, then 
$$
h_{m_{i}}\not=h_{n_{j}}\circ \gamma
$$ 
for any $i=1,\dots ,\kappa$ and for any $j=1,\dots ,\lambda$.  Hence,
in that case, the configuration of $\bfbeta$ defined by
\eqref{defbeta1} has (before ordering the elements by decreasing
order) the shape
$$
(\bfmu,\bfmu')=(\mu_1,\cdots,\mu_\nu,\mu'_1,\cdots,\mu'_{\nu'})
$$ 
where 
$$
\bfmu=(\mu_1,\cdots,\mu_\nu),\quad\quad
\bfmu'=(\mu'_1,\cdots,\mu'_{\nu'})
$$ 
are the configurations of 
$$
\bigl(h_{m_1} ,\dots, h_{m_\kappa}\bigr),\quad\quad
\bigl(h_{n_{1}}\circ\gamma, \dots, h_{n_{\lambda}}\circ\gamma\bigr),
$$
respectively.  It follows from Proposition \ref{390} that
\begin{align}
  {\mathfrak S} (\kappa+\lambda, \boldsymbol \beta, p)&=
  \sum_{\substack{1\leq a < p\\ a,\gamma\cdot
      a\not=0,\infty}}\hypk(m_{1 }a;p)\cdots \hypk(
  m_{\kappa}a;p)\hypk(n_{1}(\gamma\cdot a);p)\cdots
  \hypk(n_{\lambda}(\gamma\cdot a)
  ;p)\nonumber\\
  &=A(\bfmu)A(\bfmu')\,p+O_{\kappa,\lambda}(p^{\frac12}).
\label{frf}
\end{align}
\par
Hence by \eqref{fifi}, \eqref{123} and \eqref{frf} and by computations
similar to those we did in \S \ref{4.1}, we deduce the equality
\begin{equation}\label{MMM} {\mathcal M} _\star(X,p; \kappa , \lambda;
  \gamma)=
  Y^{-\frac{\kappa+\lambda}{2}}\Bigl(\Sigma_{3,M}(\kappa)
  \Sigma_{3,M}(\lambda)+O(p^{-\frac{1}{2}+\eps}
  Y^{\kappa+\lambda})\Bigr)+O(p^{-1}),
\end{equation}
with
\begin{align*}
  \Sigma_{3,M}(\kappa)&=\underset{1\leq \vert m_1\vert,\dots,\vert
    m_\kappa\vert <p/2 }{\sum\cdots \sum} \tau_\star (m_1)\cdots
  \tau_\star (m_\kappa) W\Bigl( \frac{m_1}{Y}\Bigr)\cdots W\Bigl(
  \frac{m_\kappa}{Y}\Bigr)A(\bfmu),\\
  \Sigma_{3,M}(\lambda)&=\underset{1\leq \vert n_1\vert,\dots,\vert
    n_\lambda\vert <p/2 }{\sum\cdots \sum} \tau_\star (n_1)\cdots
  \tau_\star (n_\lambda) W\Bigl( \frac{n_1}{Y}\Bigr)\cdots W\Bigl(
  \frac{n_\lambda}{Y}\Bigr)A(\bfmu').
\end{align*}
\par
If $\kappa$ or $\lambda$ is odd, the product $A(\bfmu)A(\bfmu')$ is
zero, hence~\eqref{nondiag} follows in that case. If $\kappa$ and
$\lambda$ are both even, then as in \eqref{1M}, we prove that the
largest contribution comes from the case where $\boldsymbol \mu
=(2,\dots, 2)$ and $\boldsymbol \mu'= (2,\dots,2)$. Hence, by a
computation similar to \eqref{1M} and \eqref{mark}, we get the
equality
$$
\Sigma_{3,M} (\kappa) =\Bigl\{C_{\star} (\kappa) +
O\bigl(Y^{-\frac{1}{2} +\epsilon}\bigr) \Bigr\} Y^\frac{\kappa}{2},
$$
and a similar one for $\Sigma_{3,M} (\lambda)$. Hence, by \eqref{MMM},
we complete the proof of \eqref{nondiag}.

\subsection{When $\gamma$ is diagonal } \label{casediag}

We then write $\gamma$ in the canonical form \eqref{canonical} and we
suppose that 
$$
p>\max(\vert \gamma_{1} \vert, \,\vert
\gamma_{2}\vert).
$$
\par
Then, by making the change of variable $a=\gamma_{2}a'$, we find that
the sum over $a$ of normalized Kloosterman sums appearing in the last
line of \eqref{123} is equal to $ {\mathfrak S} (\kappa+\lambda,
\boldsymbol \beta, p)$ as defined in Proposition \ref{390}, with
\begin{equation}\label{defbeta}
  \boldsymbol \beta =\bigl(h_{\gamma_{2}m_{1}} ,\dots, 
  h_{\gamma_{2} m_\kappa}, h_{\gamma_1 n_1}, \dots, h_{\gamma_1 n_\lambda}\bigr).
\end{equation}
\par
If the configuration of $\bfbeta$ is not a mirror configuration, we have
$$
{\mathfrak S} (\kappa+\lambda, \boldsymbol \beta, p)=O(p^{\frac12}).
$$
\par
In particular, if $\kappa \not \equiv \lambda \bmod 2$, we deduce by
\eqref{123}, \eqref{weak} and by similar treatment of the error terms
as above, that
\begin{equation}\label{toxi}
\Sigma_{3}\ll p^{\frac12 +\eps} Y^{\kappa+\lambda}.
\end{equation}
\par
Combining this with \eqref{fifi} we complete the proof of
\eqref{diag} when $\kappa$ and $\lambda$ have opposite parity.
\par
Now assume that $\kappa$ and $\lambda$ have same parity. The
combinatorics involved is then more delicate than in \S
\ref{casenondiag}, because me must take into account the cases of {\it
  crossed mirror configurations}, namely situations when some of the
$\gamma_{2}m_{i}$ are equal to some of the $\gamma_{1}n_{j}$.  
\par
To be precise, we can decompose $\Sigma_{3}$ (see \eqref{123}) into
\begin{equation}\label{decompSigma3}
  \Sigma_{3}=B^{\rm nm} +
  B_{0}^{\rm m}+
  \sum_{\substack{0\leq \nu\leq
      \min(\kappa,\lambda)\\\nu\equiv\kappa\equiv\lambda\bmod 2}}
  B^{\rm m}(\nu),
\end{equation}
where
\begin{itemize}
\item $B^{\rm nm}$ corresponds to the contribution of the
  $(\gamma_{2}m_{1},\dots, \gamma_{2}m_{\kappa},
  \gamma_{1}n_{1},\dots, \gamma_{1}n_{\lambda})$ which are not in
  mirror configuration,
\item $B_{0}^{\rm m}$ corresponds to the contribution of the
  $(\gamma_{2}m_{1},\dots, \gamma_{2}m_{\kappa},
  \gamma_{1}n_{1},\dots, \gamma_{1}n_{\lambda})$ which are in mirror
  configuration, but that configuration is not $(2,\dots, 2)$,
\item $B^{\rm m}(\nu)$ corresponds to the contribution of the
  $(\gamma_{2}m_{1},\dots, \gamma_{2}m_{\kappa},
  \gamma_{1}n_{1},\dots, \gamma_{1}n_{\lambda})$ which have a mirror
  configuration equal to $(2,\dots, 2)$, and where exactly $\nu$ of
  the $\gamma_{2}m_{i}$ ($1\leq i \leq \kappa$) are equal to $\nu$ of
  the $n_{\nu}n_{j}$ ($1\leq j \leq \lambda$).
\end{itemize}
\par
The same computation as for \eqref{toxi} gives the relation
\begin{equation*}
  B^{\rm nm}\ll p^{\frac{1}{2} +\eps}Y^{\kappa +\lambda},
\end{equation*}
which, when combined with \eqref{fifi}, fits with the error term in
\eqref{eq-mixed-conv}.
\par
We can also estimate $B_0^{\rm m} $ by following the same technique
which led to the error term in \eqref{1M}, and obtain
$$
B_{0}^{\rm m} \ll p Y^{\frac{\kappa +\lambda}{2} -1 +\eps},
$$
which, by \eqref{fifi} is absorbed by the error term in \eqref{eq-mixed-conv}.
\par
The case of $B^{\rm m}(\nu)$ is more delicate to treat. For the terms
in that sum, exactly $\nu$ of the $\gamma_{2}m_{i}$ ($1\leq i \leq
\kappa$) are equal to $\nu$ of the $\gamma_{1}n_{j}$ ($1\leq j \leq
\lambda$), and the remaining $\gamma_{2}m_{i}$ (resp. $\gamma_{1}n_j$)
are in configuration $(2,\dots, 2)$. % First remark that necessarily
% $\nu$ must have the same parity as $\kappa$ and $\lambda$ (see
% \eqref{condnu}).  Secondly
The condition $\gamma_{2}m_{i} =\gamma_{1}n_{j}$ can be parametrized
by $m_{i}=\gamma_{1}t $ and $n_j=\gamma_{2}t$ where $t$ is a non-zero
integer.  Appealing to Proposition \ref{390}, and applying some
combinatorial considerations, we deduce the formula
\begin{multline}\label{1654}
  B^{\rm m}(\nu)=p\, \nu ! \binom{\kappa}{\nu}
  \binom{\lambda}{\nu}\Bigl( \sum_{1\leq \vert \gamma_{1} t\vert,\,
    \vert \gamma_{2}t \vert < p/2} \tau_{\star} (\gamma_{1}t)
  \tau_{\star}(\gamma_{2}t)W\Bigl( \frac{\gamma_{1} t}{Y}\Bigr) W\Bigl(
  \frac{\gamma_2 t}{Y}\Bigr)
  \Bigr)^\nu\\
  \times \bigl(1\cdot 3\cdots (\kappa-\nu -1)\bigr) \Bigl( \sum_{1\leq
    \vert m\vert < p/2}\tau_{\star}^2
  (m) W^2\Bigl( \frac{m}{Y}\Bigr) \Bigr)^\frac{\kappa-\nu}{2}\\
  \times \bigl(1\cdot 3\cdots (\lambda-\nu -1)\bigr)\Bigl( \sum_{1\leq
    \vert n\vert < p/2} \tau_{\star}^2 (n) W^2\Bigl( \frac{n}{Y}\Bigr)
  \Bigr)^\frac{\lambda- \nu}{2}+O\bigl( p^{\frac{1}{2} +\eps}Y^\frac{\kappa +\lambda}{2}\bigr).
\end{multline}
\par
In this expression, the first term corresponds to the choice and to
the contribution of the $\nu$ integers $m_{i} $ and $\nu$ integers
$n_{j}$ which satisfy the condition
$\gamma_{2}m_{i}=\gamma_{1}n_{j}$. The second factor corresponds to
the contribution of the $\kappa -\nu$ remaining $m_{i}$ which are in
configuration $(2,\dots,2)$ between themselves, and the third factor
to the $\lambda-\nu$ remaining $n_{j}$ in configuration $(2,\dots,2)$
between themselves. Finally, the error term comes from the error term
in \eqref{579}. 
\par
Using the arithmetic sums $\mathcal B_{\star} (m,n,Y)$ defined in
Proposition~\ref{pr-mainterm}, we can thus summarize \eqref{1654} in
the form
\begin{equation}\label{1757}
  B^{\rm m}(\nu)=p\, \frac{\kappa!\,\lambda!}{\nu!\,
    2^{\frac{\kappa+\lambda}{2}
      -\nu} \,((\kappa-\nu)/2)! \,((\lambda-\nu)/2)!} \\
  \mathcal B_{\star} (1,1,Y)^{\frac{\kappa
      +\lambda}{2}-\nu}\, \mathcal
  B_{\star}(\gamma_{1},\gamma_{2},Y)^{\nu} + O\bigl(
  p^{\frac{1}{2} +\eps}Y^\frac{\kappa +\lambda}{2}\bigr).
\end{equation}
\par
We now obtain \eqref{diag} by combining \eqref{defY}, \eqref{fifi},
\eqref{decompSigma3}, \eqref{1757} and Proposition~\ref{pr-mainterm}.

%%%%%%%%%%%%%%%
\section{Proof of Corollary \ref{gaussianvector} }

We now deduce Corollary~\ref{gaussianvector} from
Theorem~\ref{MIXEDMOMENTS}. The probabilistic tool is the following
standard lemma:

\begin{lemma}
  Let $(\rand{X}_n,\rand{Y}_n)$ be a sequence of real-valued random
  variables. Let $Q$ be a positive definite symmetric $2\times 2$
  matrix. Suppose that, for any integers $\lambda$, $\kappa\geq 0$, we
  have
$$
\expect(\rand{X}_n^{\kappa}\rand{Y}_n^{\lambda})\longrightarrow
m_{\kappa,\lambda}(Q)
$$
as $n\ra +\infty$, where
$m_{\kappa,\lambda}(Q)=\expect(\rand{A}^{\kappa}\rand{B}^{\lambda})$
for some centered gaussian vector $(\rand{A},\rand{B})$ with
covariance matrix $Q$. Then $(\rand{X}_n,\rand{Y}_n)$ converges in law
to $(\rand{A},\rand{B})$.
\end{lemma}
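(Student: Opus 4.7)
The plan is to reduce the bivariate convergence to a univariate statement via the Cram\'er--Wold device, and then to invoke the classical (univariate) method of moments. Concretely, recall that $(\rand{X}_n,\rand{Y}_n)$ converges in law to $(\rand{A},\rand{B})$ if and only if, for every pair of real numbers $(\alpha,\beta)\in\Rr^2$, the real random variable $\alpha\rand{X}_n+\beta\rand{Y}_n$ converges in law to $\alpha\rand{A}+\beta\rand{B}$. So I fix $(\alpha,\beta)\in\Rr^2$, and aim to prove this one-dimensional convergence.

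The key observation is that, since $(\rand{A},\rand{B})$ is a centered Gaussian vector with covariance $Q$, the random variable $\rand{C}:=\alpha\rand{A}+\beta\rand{B}$ is a (possibly degenerate) centered real Gaussian with variance $\sigma^2=(\alpha,\beta)Q(\alpha,\beta)^t\geq 0$. In particular, $\rand{C}$ is determined by its moments (its characteristic function is entire, or equivalently its moments do not grow too quickly), so the classical univariate theorem of Fr\'echet--Shohat applies: convergence of all moments of $\alpha\rand{X}_n+\beta\rand{Y}_n$ to the corresponding moments of $\rand{C}$ implies convergence in law.

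For any integer $r\geq 0$, the binomial theorem yields
$$
\expect\bigl((\alpha\rand{X}_n+\beta\rand{Y}_n)^r\bigr)
=\sum_{k=0}^{r}\binom{r}{k}\alpha^{k}\beta^{r-k}\expect(\rand{X}_n^{k}\rand{Y}_n^{r-k}),
$$
and the hypothesis of the lemma gives, as $n\to+\infty$, the termwise convergence of the right-hand side to
$$
\sum_{k=0}^{r}\binom{r}{k}\alpha^{k}\beta^{r-k}m_{k,r-k}(Q)
=\expect\bigl((\alpha\rand{A}+\beta\rand{B})^r\bigr)=\expect(\rand{C}^r),
$$
using linearity of expectation in the limit and the definition of $m_{k,r-k}(Q)$. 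Thus all moments of $\alpha\rand{X}_n+\beta\rand{Y}_n$ converge to those of the Gaussian $\rand{C}$, so by the method of moments $\alpha\rand{X}_n+\beta\rand{Y}_n\Rightarrow\rand{C}$ in law, and the Cram\'er--Wold device yields the desired joint convergence.

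There is no real obstacle here, since the argument is entirely formal once one knows that Gaussian distributions are moment-determined and accepts Cram\'er--Wold; the positive-definiteness of $Q$ is not actually needed beyond ensuring that the limit random variables make sense, because a degenerate Gaussian (point mass at $0$) is still moment-determined. The only mild subtlety is bookkeeping: the hypothesis must be applied for \emph{each} pair $(k,r-k)$ with $0\leq k\leq r$, and only finitely many such pairs arise for a fixed $r$, so the passage to the limit inside the finite sum is immediate.
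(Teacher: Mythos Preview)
Your proof is correct and is exactly the approach the paper takes: the paper's one-sentence proof says the lemma ``follows from the case of individual sequences using the characterization of the Gaussian vector $(\rand{A},\rand{B})$ by its linear combinations $\alpha \rand{A}+\beta \rand{B}$ being Gaussian,'' which is precisely your Cram\'er--Wold reduction to the univariate method of moments. You have simply written out the details the paper omits.
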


This follows from the case of individual sequences using the
characterization of the Gaussian vector $(\rand{A},\rand{B})$ by its
linear combinations $\alpha \rand{A}+\beta \rand{B}$ being Gaussian.
\par
We apply this lemma to the sequence $(\rZ_p,\rZ_p\circ \gamma)$ for
$p$ prime, as in the statement of Corollary~\ref{gaussianvector}. Note
that if $\star=d$, the main term $C_{d}(\kappa,\lambda,\gamma)$ still
depends on $p$ (because of the polynomials of $(\log p^2/X)$ which it
involves). However, under the assumptions of
Corollary~\ref{gaussianvector} on $X$ and $p$, we see that in all
cases, for fixed $\kappa\geq 0$ and $\lambda\geq 0$, the limit
$$
L_{\kappa,\lambda}=\lim_{p\ra+\infty}
\frac{C_{\star}(\kappa,\lambda,\gamma)}{(c_{\star,w})^{(\kappa+\lambda)/2}}
$$
exists, and that
\begin{equation}\label{eq-limits}
  \lim_{p\ra +\infty} \expect(\rZ_p^{\kappa}(\rZ_p\circ\gamma)^{\lambda})=
  L_{\kappa,\lambda}.
\end{equation}
\par
If $\gamma$ is not diagonal, we get by~(\ref{nondiag})
and~(\ref{eq-cfk}) that $L_{\kappa,\lambda}=m_{\kappa}m_{\lambda}$
which coincides obviously with the mixed moment
$\expect(\rand{A}^{\kappa}\rand{B}^{\lambda})$ where
$(\rand{A},\rand{B})$ are independent centered Gaussian variables with
variance $1$, so we obtain Corollary~\ref{gaussianvector} in that
case.
\par
If $\gamma$ is diagonal, we must check that $L_{\kappa,\lambda}$
corresponds to the mixed moments of a gaussian vector
$(\rand{A},\rand{B})$ with covariance matrix given
by~(\ref{eq-cor-diag}). For this purpose, we use the
formula~(\ref{diag}) and note that
$$
\frac{(c_{\star,w})^{\frac{\kappa+\lambda}{2}-\nu}\, (\tilde c_{\star,
    w,\gamma})^\nu}{c_{\star,w}^{(\kappa+\lambda)/2}}
=\Bigl(\frac{\tilde{c}_{\star,w,\gamma}}{c_{\star,w}}\Bigr)^{\nu}\ra
(\cov_{\star,\gamma,w})^{\nu}
$$
as $p\ra +\infty$, with notation as in~(\ref{diag}) and
Corollary~\ref{gaussianvector}. Thus, abbreviating
$\cov=\cov_{\star,\gamma,w}$, we compute the $2$-variable exponential
generating series of $L_{\kappa,\lambda}$ by writing
\begin{align*}
  \sum_{\kappa,\lambda\geq 0}{\frac{1}{\kappa!\lambda!}
    L_{\kappa,\lambda}U^{\kappa}V^{\lambda}}&=
  \sum_{\kappa,\lambda\geq 0}
  \frac{1}{\kappa!\lambda!}U^{\kappa}V^{\lambda} \sum_{\substack{0\leq
      \nu\leq \min (\kappa,\lambda)\\ \nu\equiv \kappa\equiv \lambda
      \bmod 2}} \nu ! \binom{\kappa}{\nu} \,\binom{\lambda}{\nu}\,
  m_{\kappa-\nu}\,m_{\lambda-\nu} \cov^{\nu}\\
  &=\sum_{\nu\geq 0}\nu!\cov^{\nu} \sum_{k,l\geq
    0}{\frac{U^{\nu+2k}V^{\nu+2l}}{(\nu+2k)!(\nu+2l)!}
    \binom{\nu+2k}{\nu}\binom{\nu+2l}{\nu}m_{2k}m_{2l}}\\
  &=\sum_{\nu\geq 0}\frac{\cov^{\nu}(UV)^{\nu}}{\nu!}  \sum_{k\geq
    0}\frac{m_{2k}U^{2k}}{(2k)!}  \sum_{l\geq
    0}\frac{m_{2l}V^{2l}}{(2l)!}
  =\exp\Bigl(\frac{U^2}{2}+\cov UV+\frac{V^2}{2}\Bigr).
\end{align*}
\par
Since this is well-known to be the exponential generating series of
the moments of the Gaussian vector with covariance
matrix~(\ref{eq-cor-diag}), we obtain the desired convergence in law.

\end{document}